\newtheorem{assumption}{Assumption}
\def\qed{ \ \vrule width.2cm height.2cm depth0cm\smallskip}
\newenvironment{proof}{\noindent {\bf Proof.\/}}{$\qed$\vskip 0.1in}
\newcommand{\la}{\langle}
\newcommand{\ra}{\rangle}
\newcommand{\ba}{\begin{array}}
\newcommand{\ea}{\end{array}}
\newcommand{\be}{\begin{equation}}
\newcommand{\ee}{\end{equation}}
\newcommand{\bea}{\begin{eqnarray}}
\newcommand{\eea}{\end{eqnarray}}
\newcommand{\beaa}{\begin{eqnarray*}}
\newcommand{\eeaa}{\end{eqnarray*}}
\def\dbD{\mathbb{D}}
\def\dbE{\mathbb{E}}
\def\dbF{\mathbb{F}}
\def\dbP{\mathbb{P}}
\def\dbR{\mathbb{R}}
\def\a{\alpha}
\def\d{\delta}
\def\e{\varepsilon}
\def\l{\lambda}
\def\si{\sigma}
\def\t{\tau}
\def\o{\omega}
\def\D{\Delta}
\def\O{\Omega}
\def\cF{{\cal F}}
\def\cN{{\cal N}}
\def\cP{{\cal P}}
\def\no{\noindent}
\def\ms{\medskip}
\def\bs{\bigskip}
\def\q{\quad}
\def\qq{\qquad}
\def\cd{\cdot}
\def\cds{\cdots}
\def\qed{ \hfill \vrule width.25cm height.25cm depth0cm\smallskip}
\newcommand{\basa}{\begin{assumption}}
\newcommand{\easa}{\end{assumption}}
\newcommand{\bas}{\begin{assum}}
\newcommand{\eas}{\end{assum}}
\def\esup{\mathop{\rm ess\;sup}}
 \def\cd{\cdot}
\def\cds{\cdots}
\def\sgn{\hbox{\rm sgn$\,$}}
\def\dis{\displaystyle}
\def\cad{{c\`{a}dl\`{a}g}}
\def\1{\mathbf{1}}
\def\:{\!:\!}
\def\reff#1{{\rm(\ref{#1})}}
\begin{document}

\newtheorem{thm}{Theorem}[section]
\newtheorem{lem}[thm]{Lemma}
\newtheorem{cor}[thm]{Corollary}
\newtheorem{prop}[thm]{Proposition}
\newtheorem{rem}[thm]{Remark}
\newtheorem{eg}[thm]{Example}
\newtheorem{defn}[thm]{Definition}
\newtheorem{assum}[thm]{Assumption}

\renewcommand {\theequation}{\arabic{section}.\arabic{equation}}
\def\thesection{\arabic{section}}

\title{\bf Some Norm Estimates for  Semimartingales}
\author{Triet {\sc Pham}\footnote{University of Southern California, Department of Mathematics, trietpha@usc.edu. Research supported by USC Graduate School Dissertation Completion Fellowship.}
        \and Jianfeng {\sc Zhang}\footnote{University of Southern California, Department of Mathematics, jianfenz@usc.edu. Research supported in part by NSF grant DMS 1008873.}
}\maketitle

\begin{abstract} In this paper we introduce a new type of norms for semimartingales, under both linear and nonlinear expectations.  Our norm is defined in the spirit of  quasimartingales, and it characterizes square integrable semimartingales. This work is motivated by our study of zero-sum stochastic differential games \cite{PZ}, whose value process is conjectured to be a semimartingale under a class of probability measures.  As a by product, we establish some a priori estimates for doubly reflected BSDEs without imposing  the  Mokobodski's condition directly. 

\vspace{5mm}

\noindent{\bf Key words:}  Semimartingale, quasimartingale, $G$-expectation,  second order backward SDEs, doubly reflected backward SDEs,  Doob-Meyer decompostition.

\noindent{\bf AMS 2000 subject classifications:} 60H10, 60H30.
\end{abstract}

\section{Introduction}
\setcounter{equation}{0}

For an optimization problem under volatility uncertainty, the value process can be characterized as the unique solution of a second order backward SDE, introduced by Cheridito, Soner, Touzi, and Victoir \cite{CSTV} and Soner, Touzi, and Zhang \cite{STZ-2BSDE}, and as the unique viscosity solution of a path dependent HJB equation, introduced by Ekren, Keller, Touzi, and Zhang \cite{EKTZ} and Ekren, Touzi, and Zhang \cite{ETZ0, ETZ1, ETZ2}. See also the closely related concepts $G$-martingale introduced by Peng \cite{Peng-G} and $G$-BSDE studied by Hu, Ji, Peng, and Song \cite{HJPS}.  This value process is a supermartingale (or, in general case, a $g$-supermartingale as introduced in Peng \cite{Peng-g}), under the associated non-dominated class of mutually singular probaility measures. In Pham and Zhang \cite{PZ}, we studied a zero sum stochastic differential game and characterized the game value process as the unique viscosity solution of a path dependent Bellman-Isaacs equation.  It is natural to conjecture that, under certain technical conditions, this value process should be a semimartingale under the underlying class of probability measures, which will enable us to characterize the value process as the solution to an extended second order BSDE with a non-convex generator. This requires a systematic study on square integrable semimartingales, namely semimartingales whose martingale part and total variation of its  finite variation part are square integrable, under both linear  and nonlinear expectations.

Our first goal of this paper is to introduce a norm which characterizes square integrable semimartingales, under a fixed (linear) probability measure. Our norm is strongly motived from the definition of quasimartingales. The main feature is that the norm involves only the semimartingale itself, without involving directly  its decomposition. This is important in applications because the semimartingale under consideration is typically a value process, , e.g. in \cite{PZ},  and thus has a representation.  We prove that a progressively measurable process is a square integrable semimartingale if and only if it has finite norm in our sense.

We next extend our norm to semimartingales under nonlinear expectations, typically the $G$-expectation of Peng \cite{Peng-G}. As observed in Soner, Touzi and Zhang \cite{STZ-G}, a $G$-martingale is a supermartingale under a class of probability measures. It is clear that a  $G$-supermartingale satisfies the same property. However, a $G$-submartingale is in general neither a supermartingale nor a submartingale under each probability measure. We show that, any progressively measurable process with finite norm under $G$-expectation in our sense  has to be a semimartingale under each probability measure.

Our long term goal is to apply our norm, or its variations if necessary, to study the structure of general $G$-semimartingales and to study the semimartingale property of the viscosity solution of path dependent Bellman-Isaacs equations. The latter can also be viewed as regularity of viscosity solutions of path dependent PDEs. We hope to address these issues in future research.  

As a by product, we also provide a tractable sufficient condition for the wellposedness of doubly reflected backward SDEs (DRBSDE, for short) without imposing directly the Mokobodski's condition. There are typically two approaches in the literature. One is to assume the Mokobodski's condition, namely there exists a square integrable semimartingale between the two given barriers, see e.g. Cvitanic and Karatzas \cite{CK} and Peng and Xu \cite{PX}, and the other is to use local solutions, see e.g. Hamadene and Hassani \cite{HH} and Hamadene, Hassani and Ouknine  \cite{HHO}.   The latter approach, while easy to verify its conditions, does not yield any norm estimates. We remark that such estimates are important in applications, for example when one considers discritization of DRBSDEs, see e.g.   Chassagneux \cite{Chassagneux}. Inspired by our norm for square integrable semimartingales, we introduce a norm for the barriers of the DRBSDEs, which is equivalent to the  Mokobodski's condition but is easier to verify. Moreover, we provide the sensitivity analysis of DRBSDEs with respect to its barriers. Such an estimate seems new in the literature and is important in numerical discretization of DRBSDEs.

The rest of the paper will be organized as follows. In next section we introduce the norm for semimartingales under a fixed probability measure and obtain the estimates. In Section 3 study DRBSDEs by introducing a norm in the same spirit. In Section 4 we extend the norm to the $G$-framework. Finally in Appendix we provide some additional results.

\section{Norm Estimates for Semimartingales}
\setcounter{equation}{0}

Let $T>0$ be fixed, $(\O, \cF, \dbF, \dbP)$ be a filtered probability space on  $[0,T]$, and $\dbD(\dbF)$ be the space of $\dbF$-progressively measurable {\cad} processes.  We shall always assume
 \bea
\label{Filtration}
\left.\ba{c}
\mbox{ $\dbF$ is right continuous and its $\dbP$-augmentation}~~\bar\dbF^\dbP~~\mbox{is a Brownian filtration},\\ 
\mbox{and consequently, any $\dbF$-martingale $M$ is continuous, $\dbP$-a.s.}
\ea\right.
\eea
We note that the filtration $\dbF$ is not necessarily complete under $\dbP$. The removal of the completeness requirement will be important in Section \ref{sect-G} below. However, the following simple lemma, see e.g. \cite{STZ-aggregation}, shows that we may assume all the processes involved in this section are $\dbF$-progressively measurable.

\begin{lem}
\label{lem-filtration}
 For any $\bar\dbF^\dbP$-progressively measurable process $X$, there exists a unique  ($dt\times d\dbP$-a.s.) $\dbF$-progressively measurable process $\tilde X$ such that $\tilde X = X$, $dt\times d\dbP$-a.s. Moreover, if $X$ is \cad, $\dbP$-a.s., then so is $\tilde X$.
\end{lem}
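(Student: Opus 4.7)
The plan is to treat uniqueness and existence separately, then handle the càdlàg refinement.

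Uniqueness is immediate: if $\tilde X^1, \tilde X^2$ are two $\dbF$-progressively measurable processes, both agreeing with $X$ $dt\times d\dbP$-a.s., then $\tilde X^1 = \tilde X^2$ $dt\times d\dbP$-a.s. by transitivity.

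For existence, the starting point is that $\bar\cF^\dbP_t$ is by construction the $\dbP$-completion of $\cF_t$, so every $\bar\cF^\dbP_t$-measurable random variable admits an $\cF_t$-measurable version that agrees with it $\dbP$-almost surely. The issue is patching these pointwise-in-$t$ modifications into a single $\dbF$-progressively measurable process. My plan is to use an ``integrate, modify, differentiate'' trick. After reducing to the bounded case by truncation (and the usual monotone class step), I would set $Y_t := \int_0^t X_s\,ds$, a continuous $\bar\dbF^\dbP$-adapted process. For each rational $q\in[0,T]$ pick an $\cF_q$-measurable version $\tilde Y_q$ of $Y_q$, and define $\tilde Y_t := \limsup_{q\downarrow t,\, q\in\dbQ}\tilde Y_q$. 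The right-continuity of $\dbF$ assumed in (\ref{Filtration}) forces $\tilde Y_t$ to be $\cF_t$-measurable, and the $\dbP$-a.s. continuity of $Y$ combined with $\tilde Y_q = Y_q$ $\dbP$-a.s. for each rational $q$ gives that $\tilde Y$ and $Y$ are indistinguishable. Setting $\tilde Y$ to zero on the exceptional null set makes it everywhere continuous and $\dbF$-adapted, hence $\dbF$-progressively measurable. Finally, $\tilde X_t := \limsup_{h\downarrow 0} h^{-1}\bigl(\tilde Y_{(t+h)\wedge T} - \tilde Y_t\bigr)$ is again $\dbF$-progressively measurable, and Lebesgue's differentiation theorem applied $\omega$-by-$\omega$ yields $\tilde X = X$ $dt\times d\dbP$-a.e.

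For the càdlàg refinement I would bypass the differentiation step and modify $X$ directly. For each $q\in\dbQ\cap[0,T]$ choose an $\cF_q$-measurable version $X^{(q)}$ of $X_q$, and set $\tilde X_t := \limsup_{q\downarrow t,\, q\in\dbQ} X^{(q)}$ for $t<T$, together with a suitable definition at $T$. Right-continuity of $\dbF$ makes $\tilde X$ $\dbF$-adapted; $\dbP$-a.s. càdlàg regularity of $X$ combined with $X^{(q)} = X_q$ $\dbP$-a.s. at each rational $q$ shows $\tilde X$ and $X$ agree at every $t$ with $\dbP$-probability one, hence are indistinguishable. Modifying $\tilde X$ on a $\dbP$-null set makes it globally càdlàg and $\dbF$-adapted, therefore $\dbF$-progressively measurable, as required.

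The main obstacle is the first part: one cannot simply modify $X_t$ separately for each $t$ because uncountably many modifications would destroy any chance of joint measurability. The integrate/modify/differentiate detour circumvents this by doing the modification only along a \emph{countable} set of times at the level of the time-integral, where continuity together with right-continuity of $\dbF$ permits interpolation back to all $t$. The càdlàg case is cleaner precisely because $X$ itself has sufficient regularity to skip the integration step.
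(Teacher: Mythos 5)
The paper does not actually prove this lemma; it simply cites \cite{STZ-aggregation}, so there is no in-text argument to compare yours against. Your overall strategy --- uniqueness by transitivity, existence via the integrate/modify-along-rationals/differentiate device, with right-continuity of $\dbF$ used to make the upper right limit along rationals $\cF_{t+}=\cF_t$-measurable, and a direct rational-regularization in the \cad{} case --- is sound and is essentially the standard route to such statements.

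There is, however, one step that does not survive scrutiny, and it occurs at exactly the point the lemma is designed to be delicate. Twice you ``modify the process on a $\dbP$-null set'' (to make $\tilde Y$ everywhere continuous, and to make $\tilde X$ globally \cad) and then assert that the result is still $\dbF$-adapted. The exceptional set is $N=\bigcup_{q\in\dbQ}\{\tilde Y_q\ne Y_q\}$ together with the non-regularity set; it is $\dbP$-null but lives only in $\bar\cF^\dbP$. Since $\dbF$ is \emph{not} assumed $\dbP$-complete, $N$ need not belong to $\cF_0$ (in the intended application to Section 4 it typically does not), and $\tilde Y_t\1_{N^c}$ is then measurable only with respect to $\cF_t\vee\sigma(N)$ rather than $\cF_t$. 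In other words, this step quietly re-introduces the completion that the lemma is trying to remove. The repair is easy and makes the modification unnecessary: show directly that the limsup-regularization is already $\dbF$-progressively measurable by writing, for $s<t$, $\tilde Y_s=\inf_n\sup_{q\in\dbQ\cap(s,\,(s+1/n)\wedge t)}\tilde Y_q$, a countable inf--sup over rationals of functions of $(s,\o)$ that are $\cB([0,t])\otimes\cF_t$-measurable because $\tilde Y_q\in\cF_q\subset\cF_t$ for $q<t$ (the single time $s=t$ is handled by $\cF_{t+}=\cF_t$); the same computation covers the difference quotients defining $\tilde X$ (taken along $h=1/n$ to preserve measurability) and the \cad{} regularization of $X$. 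For the ``moreover'' clause, note also that the statement only requires $\tilde X$ to be \cad{} $\dbP$-a.s., which the unmodified limsup process already is, so no global pathwise regularization is needed at all.
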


We recall that a semimartingale $Y\in \dbD(\dbF)$ has the following decomposition: 
\bea
\label{semimg}
Y_t = Y_0 + M_t + A_t,
\eea
where $M$ is a martingale, $A$ has finite variation, and $M_0=A_0=0$.  
Now given a  process $Y\in \dbD(\dbF)$,
we are interested in the following  questions:

(i) Is $Y$ a semimartingale?

(ii) Do we have appropriate norm estimates for $Y$, $M$, and $A$?

\ms

\no The first question was answered by Bichteler-Dellacherie, see e.g. \cite{Protter} and Appendix of this paper for some further discussion. The main goal of this section is to answer the second question. As explained in Introduction, the latter question is natural and important for our study of semimartingales under nonlinear expectations.

\subsection{Some preliminary results}

We first note that, when $Y$ is a supermartingale or submartingale, it is well known that $Y$ is a semimartingale and the following norm estimates hold. Since the arguments will be important for our general case, we provide the proof for completeness.

\begin{lem}
\label{lem-super}
Let \reff{Filtration} hold. There exist universal constants $0<c<C$ such that, for any $Y$ in the form of \reff{semimg} with monotone $A$, it holds 
\bea
\label{super-est}
c\|Y\|_{\dbP,0}^2\le \dbE^\dbP\Big[|Y_0|^2 + \la M\ra_T + |A_T|^2\Big] \le C\|Y\|_{\dbP,0}^2.
\eea
where, for any $Y\in \dbD(\dbF)$,
\bea
\label{Y0}
\|Y\|_{\dbP,0}^2 := \dbE^\dbP\Big[ \sup_{0\le t\le T}|Y_t|^2\Big].
\eea
\end{lem}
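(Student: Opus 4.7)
By replacing $(Y,M,A)$ with $(-Y,-M,-A)$ one may assume $A$ is nondecreasing, so in particular $A_t\ge 0$ and $\sup_t A_t=A_T$ (recall $A_0=0$). Write $Y^*:=\sup_{0\le t\le T}|Y_t|$, so that $\|Y\|_{\dbP,0}^2=\dbE^\dbP[(Y^*)^2]$. The easy direction $\|Y\|_{\dbP,0}^2\le C_1\dbE^\dbP[|Y_0|^2+\la M\ra_T+A_T^2]$ follows from $|Y_t|\le|Y_0|+|M_t|+|A_t|$, monotonicity of $A$, and Doob's $L^2$-maximal inequality $\dbE^\dbP[\sup_t M_t^2]\le 4\dbE^\dbP[\la M\ra_T]$, applicable because assumption (\ref{Filtration}) forces $M$ to be continuous.

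For the harder direction, $\dbE^\dbP[Y_0^2]\le\|Y\|_{\dbP,0}^2$ is immediate, and it remains to control $\dbE^\dbP[\la M\ra_T]$ and $\dbE^\dbP[A_T^2]$ by $\|Y\|_{\dbP,0}^2$. I would apply It\^o's formula to $Y_t^2$; using $[M,A]=0$ (continuity of $M$),
\[
Y_T^2-Y_0^2 = 2\int_0^T Y_{s-}\,dM_s+2\int_0^T Y_{s-}\,dA_s+\la M\ra_T+\sum_{s\le T}(\Delta A_s)^2.
\]
After standard localization of the $dM$-integral, taking expectations, discarding the nonnegative $(\Delta A)^2$ sum, and using that $dA$ is a positive measure followed by Cauchy--Schwarz,
\[
\dbE^\dbP[\la M\ra_T]\;\le\;\dbE^\dbP[(Y^*)^2]+2\dbE^\dbP[Y^*A_T]\;\le\;\|Y\|_{\dbP,0}^2+2\|Y\|_{\dbP,0}\sqrt{\dbE^\dbP[A_T^2]}.
\]

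Next, from $A_T=Y_T-Y_0-M_T$ and $(a+b+c)^2\le 3(a^2+b^2+c^2)$ I obtain $\dbE^\dbP[A_T^2]\le 6\|Y\|_{\dbP,0}^2+3\dbE^\dbP[\la M\ra_T]$. Substituting the bound above produces, with $u:=\sqrt{\dbE^\dbP[A_T^2]}/\|Y\|_{\dbP,0}$, the quadratic inequality $u^2\le 9+6u$, which forces $u\le 3+3\sqrt{2}$. This yields $\dbE^\dbP[A_T^2]\le(27+18\sqrt{2})\|Y\|_{\dbP,0}^2$, and plugging back into the It\^o bound gives the matching estimate on $\dbE^\dbP[\la M\ra_T]$.

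The main obstacle is the circular dependence between $\dbE^\dbP[\la M\ra_T]$ and $\dbE^\dbP[A_T^2]$: each is most naturally bounded by the other. What makes the scheme close is that the It\^o step provides a \emph{sublinear} dependence on $\sqrt{\dbE^\dbP[A_T^2]}$, so the residual quadratic inequality has a finite solution. A secondary technical issue is a priori integrability; I would address it by first running the argument with the stopping times $\tau_n:=\inf\{t:|Y_t|+\la M\ra_t+A_t>n\}\wedge T$ in place of $T$, and then letting $n\to\infty$ by monotone convergence (the conclusion being vacuous if $\|Y\|_{\dbP,0}=\infty$).
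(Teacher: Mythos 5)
Your proposal is correct and follows essentially the same route as the paper: Itô's formula applied to $Y^2$, the bound $\dbE^\dbP[\la M\ra_T]\le \|Y\|_{\dbP,0}^2+2\dbE^\dbP[Y^*A_T]$ using the monotonicity of $A$, and then the identity $A_T=Y_T-Y_0-M_T$ to close the loop, with localization handling integrability. The only cosmetic difference is that you resolve the circular dependence via Cauchy--Schwarz and an explicit quadratic inequality in $u=\sqrt{\dbE^\dbP[A_T^2]}/\|Y\|_{\dbP,0}$, whereas the paper uses Young's inequality with a small parameter $\e$ and absorbs the $\e\,\dbE^\dbP[|A_T|^2]$ term; these are interchangeable.
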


\begin{proof}
The first inequality is obvious. We shall only prove the second inequality. By otherwise using the standard stopping techniques, we may assume without loss of generality that
$
 \dbE^\dbP[\sup_{0\le t\le T}|Y_t|^2+ \la M\ra_T + |A_T|^2]<\infty.
 $

 Apply It\^{o}'s formula and recall \reff{Filtration} that $M$ is continuous, we have
 \bea
 \label{super-est1}
 Y_T^2 = Y_0^2 + \la M\ra_T + 2\int_0^T Y_t dM_t + 2\int_0^T Y_{t-} dA_t + \sum_{0< t\le T} |\D Y_t|^2.
 \eea
 Note that
 \beaa
 \dbE^\dbP\Big[\Big(\int_0^T |Y_t|^2 d\la M\ra_t\Big)^{1\over 2}\Big]\le  \dbE^\dbP\Big[\sup_{0\le t\le T} |Y_t|  (\la M\ra_T)^{1\over 2}\Big]\le {1\over 2}  \dbE^\dbP\Big[\sup_{0\le t\le T} |Y_t|^2 +  \la M\ra_T\Big]<\infty.
 \eeaa
 Then $Y_t dM_t$ is a true martngale, and  thus, for any $\e>0$, it follows from  \reff{super-est1} and the monotonicity of $A$ that
  \bea
  \label{super-est2}
 \dbE^\dbP[\la M\ra_T] &\le& \dbE^\dbP\Big[\la M\ra_T+ \sum_{0\le t\le T} |\D Y_t|^2\Big] = \dbE^\dbP\Big[ Y_T^2 -Y_0^2 -  2\int_0^T Y_{t-} dA_t\Big]\\
  &\le& \dbE^\dbP\Big[ |Y_T|^2  +|Y_0|^2+  2\sup_{0\le t\le T}|Y_t| |A_T|\Big]
   \le C\e^{-1} \|Y\|_{\dbP,0}^2 + \e \dbE^\dbP[|A_T|^2].\nonumber
  \eea
  Moreover, note that
  $
  A_T = Y_T-Y_0 - M_T.
  $
Then \reff{super-est2} leads to
  \beaa
  \dbE^\dbP[|A_T|^2] \le C\|Y\|_{\dbP,0}^2 + C\dbE^\dbP[\la M\ra_T] \le C\e^{-1} \|Y\|_{\dbP,0}^2 + C\e \dbE^\dbP[|A_T|^2].
  \eeaa
  Set $\e := {1\over 2C}$ for the above $C$, we obtain
  $
   \dbE^\dbP[|A_T|^2] \le C\|Y\|_{\dbP,0}^2.
 $
 This, together with \reff{super-est2}, proves the second inequality.
 \end{proof}
 
 The next lemma is a discrete version of Lemma \ref{lem-super}. Since the arguments are very similar, we omit the proof. 
 \begin{lem}
 \label{lem-super2} 
Let $0=\t_0\le\cds\le\t_n=T$ be a sequence of stopping times. In the setting of Lemma \ref{lem-super},  if $A_{\t_i}\in \cF_{\t_{i-1}}$, then
\bea
\label{super-est3}
c\dbE^\dbP\Big[\max_{0\le i\le n}|Y_{\t_i}|^2\Big] \le  \dbE^\dbP\Big[|Y_0|^2 + \la M\ra_T + |A_T|^2\Big] \le C\dbE^\dbP\Big[\max_{0\le i\le n}|Y_{\t_i}|^2\Big].
\eea 
\end{lem}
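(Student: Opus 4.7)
\noindent My plan is to mimic the proof of Lemma \ref{lem-super}, replacing It\^o's formula by the discrete telescoping identity
\beaa
Y_T^2 - Y_0^2 = \sum_{i=1}^n (Y_{\t_i} - Y_{\t_{i-1}})^2 + 2\sum_{i=1}^n Y_{\t_{i-1}}(Y_{\t_i} - Y_{\t_{i-1}}).
\eeaa
Set $\D_i M := M_{\t_i} - M_{\t_{i-1}}$ and $\D_i A := A_{\t_i} - A_{\t_{i-1}}$. After a standard localization (as in Lemma \ref{lem-super}) ensuring that all the quantities below are integrable, I take expectations and use the hypothesis $A_{\t_i}\in\cF_{\t_{i-1}}$ together with the optional sampling theorem to obtain
\beaa
\dbE^\dbP\big[Y_{\t_{i-1}}\D_i M\,\big|\,\cF_{\t_{i-1}}\big]=0,\qq \dbE^\dbP\big[(\D_i M)(\D_i A)\,\big|\,\cF_{\t_{i-1}}\big]=0.
\eeaa
Expanding $(Y_{\t_i}-Y_{\t_{i-1}})^2=(\D_i M)^2+2(\D_i M)(\D_i A)+(\D_i A)^2$, I am led to
\beaa
\dbE^\dbP[Y_T^2 - Y_0^2] = \dbE^\dbP\Big[\sum_i (\D_i M)^2 + \sum_i (\D_i A)^2\Big] + 2\dbE^\dbP\Big[\sum_i Y_{\t_{i-1}}\D_i A\Big],
\eeaa
and a further application of optional sampling to $M^2-\la M\ra$ gives $\dbE^\dbP\big[\sum_i(\D_i M)^2\big]=\dbE^\dbP[\la M\ra_T]$, the discrete counterpart of the $\la M\ra_T$ term in \reff{super-est1}.

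\noindent From here the argument is parallel to Lemma \ref{lem-super}. Since $A$ is monotone, $\sum_i|\D_i A|=|A_T|$, hence the last sum is bounded in absolute value by $\max_i|Y_{\t_i}|\cd|A_T|$, and Young's inequality with parameter $\e>0$ yields
\beaa
\dbE^\dbP[\la M\ra_T] \le C\e^{-1}\dbE^\dbP\Big[\max_i|Y_{\t_i}|^2\Big] + \e\,\dbE^\dbP[|A_T|^2].
\eeaa
Combining with $A_T=Y_T-Y_0-M_T$, which gives $\dbE^\dbP[|A_T|^2]\le C\dbE^\dbP[\max_i|Y_{\t_i}|^2]+C\dbE^\dbP[\la M\ra_T]$, and absorbing by taking $\e$ small enough, I recover the second inequality in \reff{super-est3}. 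The first inequality is the easy direction: starting from $Y_{\t_i}=Y_0+M_{\t_i}+A_{\t_i}$ and noting $|A_{\t_i}|\le|A_T|$ by monotonicity of $A$, Doob's $L^2$ inequality applied to $M$ gives $\dbE^\dbP\big[\max_i|Y_{\t_i}|^2\big]\le C\dbE^\dbP\big[|Y_0|^2+\la M\ra_T+|A_T|^2\big]$.

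\noindent The main subtlety, compared with the continuous case, is the handling of the cross term $\sum_i(\D_i M)(\D_i A)$: in Lemma \ref{lem-super} it simply does not appear, because the quadratic variation of $M+A$ reduces to $\la M\ra$ (as $A$ has finite variation and $M$ is continuous), whereas in the discrete identity the cross term must be killed by conditioning, and this is precisely where the hypothesis $A_{\t_i}\in\cF_{\t_{i-1}}$, a partition-wise predictability of $A$, enters crucially. The remaining $\e$-absorption is a routine repetition of the continuous argument, which is why the authors chose to omit the detailed proof.
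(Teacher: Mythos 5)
Your argument is correct and is exactly the discrete adaptation of the proof of Lemma \ref{lem-super} that the paper has in mind when it omits the proof as ``very similar'': the telescoping identity replaces It\^o's formula, the hypothesis $A_{\t_i}\in\cF_{\t_{i-1}}$ kills the cross term $\sum_i(\D_i M)(\D_i A)$, and the $\e$-absorption via $A_T=Y_T-Y_0-M_T$ is repeated verbatim. Your closing remark correctly identifies the one genuinely new point in the discrete setting.
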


\subsection{Square integrable semimartingales}
 In this subsection we characterize the norm for square integrable semimartingales. For $0\le t_1<t_2\le T$, let $\dis\bigvee_{t_1}^{t_2} A$ denote the total variation of $A$ over the interval $(t_1, t_2]$.

\begin{defn}
We say a semimartingale $Y$ in the form of \reff{semimg} is a square integrable semimartingale if
\bea
\label{Y2}
\dbE^\dbP\Big[ |Y_0|^2 + \la M\ra_T + \Big(\bigvee_0^T A\Big)^2\Big]<\infty.
\eea
\end{defn}
We remark that \reff{Y2} is the norm used in standard literature for semimartingales, see e.g. \cite{Protter}. Clearly, for a square integrable semimartingale $Y$, we have $\|Y\|_{\dbP,0}<\infty$. However,  when $A$ is not monotone, in general the left side of \reff{Y2} cannot be dominated by $C\|Y\|_{\dbP,0}^2$. See Example \ref{eg-Y0} below.

 Our goal is to characterize square integrable semimartingales through the process $Y$ itself, without involving $M$ and $A$ directly. In many applications, see e.g. \cite{PZ}, we may have a representation formula for the process $Y$, but in general it is difficult to obtain representation formulae for $M$ and $A$. So  conditions imposed on $Y$ are more tractable than those on $M$ and $A$. We introduce the following norm:
 \bea
 \label{YP}
\|Y\|_\dbP^2 := \|Y\|_{\dbP,0}^2 + \sup_\pi \dbE^\dbP\Big[\Big(\sum_{i=0}^{n-1}\big|\dbE^\dbP_{\t_i}(Y_{\t_{i+1}})-Y_{\t_i}\big|\Big)^2\Big], ~\mbox{for any}~ Y\in \dbD(\dbF),
 \eea
 where the supremum is over all partitions $\pi: 0=\t_0\le \cds\le \t_n=T$ for some stopping times $\t_0,\cds,\t_n$.

 \begin{rem}
\label{rem-quasimg}
{\rm Our norm $\|\cd\|_\dbP$ is strongly motivated from the definition of  quasimartingale:  a  process $Y\in \dbD(\dbF)$ is a called a quasimartingale if
 \bea
 \label{quasimg}
  \sup_\pi \dbE^\dbP\Big[\sum_{i=0}^{n-1}\big|\dbE^\dbP_{t_i}(Y_{t_{i+1}})-Y_{t_i}\big|\Big]<\infty,
 \eea
where $\pi$ is a deterministic partition of $[0,T]$. We refer to  Rao \cite{Rao} and Dellacherie and Meyer \cite{DM} for the theory of quasimartingales, see also Meyer and Zheng \cite{MZ}. It is clear that our definition imposes stronger condition on $Y$: $\|Y\|_\dbP < \infty$ implies $Y$ is a quasimartingale.}
\qed
\end{rem}


The following a priori estimate is the main technical result of the paper.
 \begin{thm}
 \label{thm-semimg-est}
There exist universal constants $0<c<C$ such that, for any square integrable semimartingale $Y_t = Y_0 +M_t + A_t$,
\bea
\label{semi-est}
c\|Y\|_{\dbP}^2\le \dbE^\dbP\Big[ |Y_0|^2 + \la M\ra_T + \Big(\bigvee_0^T A\Big)^2\Big] \le C\|Y\|_{\dbP}^2.
\eea
 \end{thm}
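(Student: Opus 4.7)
The plan is to prove the two inequalities in \reff{semi-est} separately. For the lower inequality $c\|Y\|_\dbP^2\le \dbE^\dbP\big[|Y_0|^2+\la M\ra_T+(\bigvee_0^T A)^2\big]$, I will bound $\|Y\|_{\dbP,0}^2$ via Doob's $L^2$ inequality applied to the continuous $\dbP$-martingale $M$, together with the pointwise bound $|A_t|\le \bigvee_0^T A$. For the supremum-over-partitions term, the martingale property of $M$ gives $\dbE^\dbP_{\t_i}[Y_{\t_{i+1}}]-Y_{\t_i} = \dbE^\dbP_{\t_i}[A_{\t_{i+1}}-A_{\t_i}]$; setting $U_i := |A_{\t_{i+1}}-A_{\t_i}|$ and $V_i := \dbE^\dbP_{\t_i}[U_i]$, one has $\sum_i U_i\le K := \bigvee_0^T A$. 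The $\cF_{\t_i}$-measurability of $V_i$ together with the tower property yields $\dbE^\dbP[V_iV_j] = \dbE^\dbP[V_iU_j]$ for $i\le j$, and a short Cauchy-Schwarz argument then gives $\dbE^\dbP\big[(\sum_iV_i)^2\big]\le 4\dbE^\dbP[K^2]$, as desired.

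For the upper inequality, the main tool is a discrete It\^o expansion along an arbitrary stopping-time partition $\pi: 0=\t_0\le\cds\le\t_n=T$. Writing $Y_{\t_{j+1}}-Y_{\t_j} = m_j+a_j$ with the discrete-martingale increment $m_j := Y_{\t_{j+1}}-\dbE^\dbP_{\t_j}[Y_{\t_{j+1}}]$ and the discrete-predictable increment $a_j := \dbE^\dbP_{\t_j}[Y_{\t_{j+1}}]-Y_{\t_j}$, I expand
\[
Y_T^2 - Y_0^2 = \sum_j(m_j+a_j)^2 + 2\sum_j Y_{\t_j}(m_j+a_j),
\]
and use the orthogonalities $\dbE^\dbP[Y_{\t_j}m_j]=\dbE^\dbP[m_ja_j]=0$ and $\dbE^\dbP[m_jm_k]=0$ for $j\ne k$ to obtain
\[
\dbE^\dbP\Big[\sum_j m_j^2\Big]\le \dbE^\dbP[Y_T^2] + 2\dbE^\dbP\Big[\max_j|Y_{\t_j}|\cdot \sum_j|a_j|\Big]\le C\|Y\|_\dbP^2.
\]
This bounds the discrete quadratic variation $\sum_j m_j^2$ uniformly in the partition $\pi$ by $C\|Y\|_\dbP^2$.

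To conclude, I let $|\pi|\to 0$ along a refining sequence of deterministic partitions. The continuity of $M$ ensured by \reff{Filtration}, combined with $L^1$ estimates on the residuals $(A_{\t_{j+1}}-A_{\t_j})-\dbE^\dbP_{\t_j}[A_{\t_{j+1}}-A_{\t_j}]$, yields $\sum_j m_j^2 \to \la M\ra_T$ in $L^1$, so Fatou's lemma gives $\dbE^\dbP[\la M\ra_T]\le C\|Y\|_\dbP^2$. Since $\|Y\|_\dbP<\infty$ in particular makes $Y$ a quasimartingale (Remark \ref{rem-quasimg}), Rao's theorem provides the canonical decomposition with predictable $A$; the classical Rao-Meyer identity then gives $\sum_j|a_j^\pi| = \sum_j|\dbE^\dbP_{t_j}[A_{t_{j+1}}-A_{t_j}]|\to \bigvee_0^T A$ almost surely along refining partitions, so a second application of Fatou produces $\dbE^\dbP[(\bigvee_0^T A)^2]\le \|Y\|_\dbP^2$. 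Combining with the trivial bound $\dbE^\dbP[|Y_0|^2]\le \|Y\|_\dbP^2$ establishes the upper inequality.

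The main obstacle will be justifying these two limit passages, $\sum_j m_j^2\to \la M\ra_T$ in $L^1$ and $\sum_j|a_j^\pi|\to \bigvee_0^T A$ almost surely. The latter relies crucially on the predictability of $A$ furnished by Rao's theorem; the former requires a careful estimate of the residuals $(A_{\t_{j+1}}-A_{\t_j})-\dbE^\dbP_{\t_j}[A_{\t_{j+1}}-A_{\t_j}]$ using the continuity of $M$ and the square-integrability hypothesis on $Y$.
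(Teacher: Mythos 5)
Your proof of the left inequality of \reff{semi-est} is correct and close in spirit to the paper's: both reduce $\dbE^\dbP_{\t_i}[Y_{\t_{i+1}}]-Y_{\t_i}$ to $\dbE^\dbP_{\t_i}[A_{\t_{i+1}}]-A_{\t_i}$ and then control $\dbE^\dbP\big[\big(\sum_i\dbE^\dbP_{\t_i}[|\D A_{\t_{i+1}}|]\big)^2\big]$ by $\dbE^\dbP\big[\big(\bigvee_0^TA\big)^2\big]$; your Garsia-type inequality $\dbE^\dbP[(\sum_iV_i)^2]\le 2\dbE^\dbP[K\sum_iV_i]$ is a clean alternative to the paper's martingale-orthogonality computation in \reff{semi-est1}. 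Likewise, your discrete energy identity giving $\dbE^\dbP[\sum_jm_j^2]\le C\|Y\|_\dbP^2$ uniformly in $\pi$ is sound and is in substance the paper's Step 1 (\reff{semi-est3}) combined with its estimate \reff{semi-est2} for $\la M\ra_T$.

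The gap is in the second limit passage. What Rao's theory of quasimartingales actually provides is an identity of \emph{first} moments: $\sup_\pi\dbE^\dbP\big[\sum_j\big|\dbE^\dbP_{t_j}[A_{t_{j+1}}]-A_{t_j}\big|\big]=\dbE^\dbP\big[\bigvee_0^TA\big]$ for the predictable part $A$. It does not furnish almost sure (or in-probability) convergence of $\sum_j|a_j^\pi|$ to $\bigvee_0^TA$ along refining partitions, which is exactly what your Fatou argument needs to upgrade the first-moment identity to the second-moment bound $\dbE^\dbP\big[\big(\bigvee_0^TA\big)^2\big]\le C\|Y\|_\dbP^2$. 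Proving such a pathwise approximation is essentially the whole difficulty of the theorem, and deterministic partitions are in any case unlikely to suffice: the jumps of $A$ occur at predictable \emph{stopping times}, and to make the conditional increments register a jump one must condition along announcing sequences $\t^n_{i,j}\uparrow\t^n_i$ — this is precisely why $\|\cdot\|_\dbP$ in \reff{YP} is defined over stopping-time partitions, in contrast to the quasimartingale variation \reff{quasimg}. The paper avoids any limit theorem of this kind: for the continuous part it writes $A_t=\int_0^ta_s\,dK_s$, approximates $a$ by a simple process so that $\bigvee_0^TA=\sum_i\sgn(a_{t_i})(A_{t_{i+1}}-A_{t_i})$ holds \emph{exactly} on the partition adapted to $a$, and splits this sum into a discrete martingale (controlled by \reff{semi-est3}) plus $\sum_i|a_i^\pi|$ (controlled by the definition of $\|Y\|_\dbP$); the jump part is handled separately via $\bigvee_0^TA^d=\sum_{0<t\le T}|\D Y_t|$ and the announcing-sequence argument \reff{Ad3}. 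Your first limit passage can be salvaged in the weaker form $\liminf_\pi\sum_jm_j^2\ge\la M\ra_T$ using the continuity of $M$ and the boundedness of $\sum_j|r_j|$, but to close the proof you would still need to supply an argument of the type of the paper's Steps 2--4 for the total variation; the Fatou/Rao route as stated does not go through.
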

	
 \begin{proof} (i) We first prove the left inequality. Let $\pi: 0=\t_0\le \cds\le \t_n=T$  be an arbitrary partition, and denote $\D A_{\t_{i+1}}:= A_{\t_{i+1}}-A_{\t_i}$. Then
 \bea
 \label{semi-est1}
&&\dbE^\dbP\Big[\Big(\sum_{i=0}^{n-1}\big|\dbE^\dbP_{\t_i}(Y_{\t_{i+1}})-Y_{\t_i}\big|\Big)^2\Big]= \dbE^\dbP\Big[\Big(\sum_{i=0}^{n-1}\big|\dbE^\dbP_{\t_i}(A_{\t_{i+1}})-A_{\t_i}\big|\Big)^2\Big]\nonumber\\
 &\le& \dbE^\dbP\Big[\Big(\sum_{i=0}^{n-1}\dbE^\dbP_{\t_i}(|\D A_{\t_{i+1}}|)\Big)^2\Big] =  \dbE^\dbP\Big[\Big(\sum_{i=0}^{n-1}[\dbE^\dbP_{\t_i}(|\D A_{\t_{i+1}}|) - |\D A_{\t_{i+1}}|] + \sum_{i=0}^{n-1} |\D A_{\t_{i+1}}| \Big)^2\Big]\nonumber \\
 &\le& C \dbE^\dbP\Big[\Big(\sum_{i=0}^{n-1}[\dbE^\dbP_{\t_i}(|\D A_{\t_{i+1}}|) - |\D A_{\t_{i+1}}|]\Big)^2\Big]  + C\dbE^\dbP\Big[ \Big(\bigvee_0^T A\Big)^2\Big] .
 \eea
 Note that
 \beaa
 \sum_{i=0}^{j}[\dbE^\dbP_{\t_i}(|\D A_{\t_{i+1}}|) - |\D A_{\t_{i+1}}|], j=0,\cds, n-1, ~~\mbox{is a martingale}.
 \eeaa
 Then
 \beaa
&& \dbE^\dbP\Big[\Big(\sum_{i=0}^{n-1}[\dbE^\dbP_{\t_i}(|\D A_{\t_{i+1}}|) - |\D A_{\t_{i+1}}|]\Big)^2\Big]  = \dbE^\dbP\Big[\sum_{i=0}^{n-1}\big[\dbE^\dbP_{\t_i}(|\D A_{\t_{i+1}}|) - |\D A_{\t_{i+1}}|\big]^2\Big]\\
&\le&  C \dbE^\dbP\Big[\sum_{i=0}^{n-1}\big[\big(\dbE^\dbP_{\t_i}(|\D A_{\t_{i+1}}|)\big)^2 + |\D A_{\t_{i+1}}|^2\big]\Big] \le  C \dbE^\dbP\Big[\sum_{i=0}^{n-1}\big[\dbE^\dbP_{\t_i}(|\D A_{\t_{i+1}}|^2) + |\D A_{\t_{i+1}}|^2\big]\Big]\\
&\le&  C \dbE^\dbP\Big[\sum_{i=0}^{n-1} |\D A_{\t_{i+1}}|^2\Big]\le C \dbE^\dbP\Big[\big(\sum_{i=0}^{n-1} |\D A_{\t_{i+1}}|\big)^2\Big] \le C\dbE^\dbP\Big[ \Big(\bigvee_0^T A\Big)^2\Big] .
 \eeaa
 This, together with \reff{semi-est1} and the left inequality of \reff{super-est}, proves the left inequality of \reff{semi-est}.

(ii)  We now prove the right inequality. First, for any $\e>0$, following the arguments in Lemma \ref{lem-super} one can easily  show that
 \bea
 \label{semi-est2}
\dbE^\dbP [\la M\ra_T  ]  \leq C\e^{-1} \|Y\|_{\dbP,0}^2 + \e \dbE^\dbP\Big[ \big(\bigvee_{0}^T A\big)^2 \Big].
\eea
We claim that
\bea
\label{semi-claim}
\dbE^\dbP\Big[ \big(\bigvee_{0}^T A\big)^2 \Big] \le C\|Y\|_{\dbP}^2 + C\dbE^\dbP [\la M\ra_T  ] .
\eea
This, together with \reff{semi-est2} and by choosing $\e$ small enough, implies the right inequality of \reff{semi-est} immediately.

We prove \reff{semi-claim}  in four steps.

\ms
 {\it Step1.}  Let $\pi:  0 = \t_0 \le \t_1 \le... \le \t_n = T$ be an arbitrary partition. Note that
\beaa
\dbE^\dbP_{\t_{i}}[Y_{\t_{i+1}}] - Y_{\t_{i}} =  \dbE^\dbP_{\t_{i}}[A_{\t_{i+1}}] - A_{\t_{i}}.
 \eeaa
Then
\beaa
\sum_{i = 0}^{n-1} \big[ A_{\t_{i+1}} - \dbE^\dbP_{\t_{i}}[A_{\t_{i+1}}] \big] &=& A_T - \sum_{i=0}^{n-1}\big(\dbE^\dbP_{\t_{i}}[A_{\t_{i+1}}]- A_{\t_{i}} \big)\\
&=& Y_T - Y_0 - M_T - \sum_{i=0}^{n-1}\big(\dbE^\dbP_{\t_{i}}[Y_{\t_{i+1}}] - Y_{\t_{i}}\big).
\eeaa
By the definition of $\|Y\|_\dbP$ \reff{YP}, we see that
\beaa
 \dbE^\dbP \Big[ \Big(\sum_{i = 0}^{n-1} \big[ A_{\t_{i+1}} - \dbE^\dbP_{\t_{i}}[A_{\t_{i+1}}] \big]\Big)^2\Big]\le C\|Y\|_\dbP^2 + C\dbE^\dbP[\la M\ra_T  ] .
 \eeaa
Note that
\beaa
\sum_{i=0}^{j-1}\big[ A_{\t_{i+1}} - \dbE^\dbP_{\t_{i}}[A_{\t_{i+1}}] \big],~~ j=1,\cds, n,~~\mbox{is a martingale}.
\eeaa
Then
\bea
\label{semi-est3}
 \dbE^\dbP \Big[ \sum_{i = 0}^{n-1} \big[ A_{\t_{i+1}} - \dbE^\dbP_{\t_{i}}[A_{\t_{i+1}}] \big]^2\Big]\le C\|Y\|_\dbP^2 + C\dbE^\dbP[\la M\ra_T  ] .
 \eea

 {\it Step 2.} In this step we assume   $A_t = \int_0^t a_s dK_s $, where $K$ is a continuous nondecreasing process and $a$ is a simple process. That is,
  \beaa
 a = a_{t_0} \mathbf{1}_{\{t_0\}} + \sum_{i=0}^{n-1} a_{t_i} \mathbf{1}_{(t_i,t_{i+1}]}&\mbox{for some}& 0=t_0<\cds<t_n=T, \; a_{t_i} \in \cF_{t_i}.
 \eeaa
Then, denoting $\a_i :=\sgn(a_{t_i})\in \cF_{t_i}$,
\beaa
\bigvee_0^T A &=& \int_0^T |a_t| dK_t =  \sum_{i=0}^{n-1}  \int_{t_i}^{t_{i+1}} \a_{i} a_t dK_t = \sum_{i=0}^{n-1} \a_{i} [A_{t_{i+1}} - A_{t_{i}}]\\
&=& \sum_{i=0}^{n-1} \a_{i}\big(A_{t_{i+1}} - \dbE^\dbP_{t_{i}}[A_{t_{i+1}}]\big) +  \sum_{i=0}^{n-1} \a_{i}\big (\dbE^\dbP_{t_{i}}[A_{t_{i+1}} ]- A_{t_{i}} \big) .
\eeaa
Note that
\beaa
\sum_{i=0}^{j} \a_{i}\big(A_{t_{i+1}} - \dbE^\dbP_{t_{i}}[A_{t_{i+1}}]\big), ~~j=0,\cds, n-1,~~\mbox{is a martingale}.
\eeaa
Then
\beaa
\dbE^\dbP\Big[\big(\bigvee_0^T A  \big)^2\Big]& \le& C\dbE^\dbP\Big[\sum_{i=0}^{n-1} \big|A_{t_{i+1}} - \dbE^\dbP_{t_{i}}[A_{t_{i+1}}]\big|^2+\Big(\sum_{i=0}^{n-1} \big|\dbE^\dbP_{t_{i}}[A_{t_{i+1}} ]- A_{t_{i}} \big| \Big)^2\Big].
\eeaa
By \reff{semi-est3} and the definition of $\|Y\|_\dbP$ \reff{YP} we obtain  \reff{semi-claim}.

\ms
{\it Step 3.} We now prove \reff{semi-claim} for general continuous process $A$ .  Denote $\dis K_t := \bigvee_0^t A$. Since $A$ is continuous, $K$ is also continuous. Moreover $d A_t$ is absolutely continuous with respect to $d K_t$ and thus $d A_t = a_t dK_t$ for some $a$. By \cite{KS}, Chapter 3 Lemma 2.7, for every $\e > 0$ there exists a simple process $\{a^\e\}$ such that

\bea
\label{semi-ae}
\dbE^\dbP\Big[\Big(\int_0^T |a^\e_t - a_t|dK_t\Big)^2\Big] \le \e.
\eea
Denote
\beaa
A^\e_t := \int_0^t a^\e_s dK_s,\q Y^\e_t := Y_0 + M_t + A^\e_t.
\eeaa
Then by {\it Step 2} we see that
\bea
\label{semi-est5}
\dbE^\dbP\Big[ \big(\bigvee_{0}^T A^\e \big)^2 \Big] \le C\|Y^\e\|_{\dbP}^2 + C\dbE^\dbP[\la M\ra_T  ]  .
\eea

Note that
\beaa
\bigvee_{0}^T A \le \bigvee_{0}^T A^\e + \bigvee_{0}^T [A^\e-A] \le \bigvee_{0}^T A^\e + \int_0^T |a^\e_t - a_t|dK_t.
\eeaa
Then
\bea
\label{semi-est6}
\dbE^\dbP\Big[ \big(\bigvee_{0}^T A \big)^2 \Big] \le C\dbE^\dbP\Big[ \big(\bigvee_{0}^T A^\e \big)^2 \Big]  + C\e.
\eea
On the other hand, apply the left inequality of \reff{semi-est} on $Y^\e - Y = A^\e-A$, we get
\beaa
\|Y^\e-Y\|_\dbP^2 \le C\dbE^\dbP\Big[\Big(\bigvee_0^T (A^\e-A)\Big)^2\Big] \le C\dbE^\dbP\Big[\Big(\int_0^T |a^\e_t - a_t|dK_t\Big)^2\Big] \le C\e.
\eeaa
Then
\beaa
\|Y^\e\|_\dbP^2 \le C\|Y\|_\dbP^2 + C\e.
\eeaa
Plug this and \reff{semi-est6} into \reff{semi-est5}, we get
\beaa
\dbE^\dbP\Big[ \big(\bigvee_{0}^T A \big)^2 \Big] \le C\|Y\|_{\dbP}^2 + C\dbE^\dbP [\la M\ra_T  ]   + C\e.
\eeaa
Since $\e$ is arbitrary, we obtain \reff{semi-claim}.

\ms
{\it Step 4.} We now prove \reff{semi-claim} for the general case. Since $A$ has finite variation, we can decompose $A = A^c + A^d$, where $A^c$ is the continuous part and $A^d$ is the part with pure jumps. Since $Y$ is {\cad} and $M$ is continuous, $A$ and $A^d$ are {\cad}. We denote $Y^c_t = Y_0 + M_t + A^c_t$. From {\it Step 3} we have
\beaa
\dbE^\dbP\Big[ |Y_0|^2 + \la M\ra_T + \Big(\bigvee_0^T A^c\Big)^2\Big] \le C\|Y^c\|_{\dbP}^2. 
\eeaa
Note that 
\beaa
\|Y^c\|_{\dbP} \le  \|Y\|_{\dbP} + \|A^d\|_{\dbP}
\eeaa
and apply the left inequality of \reff{semi-est} on $A^d$ we see that
\beaa
 \|A^d\|_{\dbP}^2 \le C\dbE^\dbP\Big[\Big(\bigvee_0^T A^d\Big)^2\Big].
 \eeaa
 Then
 \beaa
 \dbE^\dbP\Big[ |Y_0|^2 + \la M\ra_T + \Big(\bigvee_0^T A^c\Big)^2\Big]  \le C\|Y\|_{\dbP}^2 +  C\dbE^\dbP\Big[\Big(\bigvee_0^T A^d\Big)^2\Big].
 \eeaa
 Moreover, note that
 \beaa
 \bigvee_0^T A\le \bigvee_0^T A^c + \bigvee_0^T A^d.
 \eeaa
Thus, to prove \reff{semi-claim}, it suffices to show that
 \bea
 \label{Ad}
 \dbE^\dbP\Big[\Big(\bigvee_0^T A^d\Big)^2\Big] \le C\|Y\|_{\dbP}^2.
 \eea
 
 To this end, we first note that
 \bea
 \label{AdY}
 \bigvee_0^T A^d = \sum_{0< t\le T} |\D A_t| =  \sum_{0< t\le T} |\D Y_t|.
 \eea
 Define, for each $n$,
 \beaa
 D_n := \sum_{0< t\le T} |\D Y_t| \1_{\{|\D Y_t|\ge {1\over n}\}},
 \eeaa
 and, $\t^n_0 := 0$, and for $m\ge 0$, by denoting $Y_t:= Y_T$ for $t\ge T$,
 \beaa
 \t^n_{m+1} := \inf\Big\{ t> \t^n_m: |\D Y_t| \ge {1\over n}\Big\}\wedge (T+1).
 \eeaa
 We remark that we use $T+1$ instead of $T$ here so that $\D Y_T$ will not be counted repeatedly at below.  By the right continuity of $\dbF$ we see that $\t^n_i$ are stopping times.
 It is clear that
 \beaa
 D_n \uparrow  \sum_{0\le t\le T} |\D Y_t|~~\mbox{as}~~n\to\infty, &\mbox{and}& \sum_{i=1}^m |\D Y_{\t^n_i}| \uparrow D_n~~\mbox{as}~~m\to\infty.
 \eeaa
 Therefore, to obtain \reff{Ad} it suffices to show that
 \bea
 \label{Ad2}
 \dbE^\dbP\Big[\Big(\sum_{i=1}^m |\D Y_{\t^n_i}|\Big)^2\Big] \le \|Y\|_{\dbP}^2&\mbox{for all}~~n, m.
 \eea
 
 We now fix $n, m$. Since $\dbF$ is a Brownian filtration, all $\dbF$ - stopping time is predictable, see e.g.  $\cite{RY}$, Corollary 5.7. Then for each $\t^n_i$, there exist $\{\t^n_{i,j}, j\ge 1\}$ such that $\t^n_{i,j}<\t^n_i$ and $\t^n_{i,j}\uparrow \t^n_i$ as $j\to\infty$. By definition of $\|Y\|_\dbP$  \reff{YP}, we have
 \bea
 \label{Ad3}
  \dbE^\dbP\Big[\Big(\sum_{i=1}^m |\dbE^\dbP_{\t^n_{i-1} \vee \t^n_{i,j}}[Y_{\t^n_i}]-Y_{\t^n_{i-1} \vee \t^n_{i,j}}|\Big)^2\Big] \le  \|Y\|_{\dbP}^2.
  \eea
  Send $j\to \infty$, since $\dbF$ is continuous, we see that
  \beaa
  \lim_{j\to\infty}[\dbE^\dbP_{\t^n_{i-1} \vee \t^n_{i,j}}[Y_{\t^n_i}]-Y_{\t^n_{i-1} \vee \t^n_{i,j}}] =  Y_{\t^n_i}-Y_{\t^n_i-} =\D Y_{\t^n_i} .
  \eeaa
 Then, noting that $\dbE^\dbP[\sup_{0\le t\le T}|\D Y_t|^2] <\infty$, by applying the Dominated Convergence Theorem we obtain \reff{Ad2} from \reff{Ad3}. This implies \reff{Ad}, which in turn implies \reff{semi-claim}.
 \end{proof}	

As a direct consequence of the above a priori estimates, we have
\begin{thm}
\label{thm-semimg}
A process $Y\in \dbD(\dbF)$ is a square integrable semimartingale if and only if $\|Y\|_\dbP<\infty$.
\end{thm}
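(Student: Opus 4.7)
The plan. The ``only if'' direction is immediate: if $Y$ is a square integrable semimartingale, then the expectation on the right-hand side of \reff{semi-est} is finite by \reff{Y2}, so the left inequality of Theorem \ref{thm-semimg-est} yields $\|Y\|_\dbP<\infty$. All of the work is in the converse.

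For the ``if'' direction, my first step would be to produce a semimartingale decomposition of $Y$ from the single assumption $\|Y\|_\dbP<\infty$. As observed in Remark \ref{rem-quasimg}, this condition forces $Y$ to be a quasimartingale in the classical sense of \reff{quasimg}. Rao's theorem then writes $Y=Y_1-Y_2$ as the difference of two nonnegative supermartingales, and the Doob--Meyer decomposition applied to each $Y_i$---with continuous martingale parts, thanks to the Brownian filtration hypothesis \reff{Filtration}---delivers a decomposition $Y=Y_0+M+A$ with $M$ a martingale and $A$ of finite variation, matching \reff{semimg}.

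Second, I would upgrade this to the square integrability in \reff{Y2} by localization. Set $\t_k:=\inf\{t:|M_t|+\bigvee_0^tA\ge k\}\wedge T$, so that the stopped process $Y^{\t_k}$ is bounded, hence trivially a square integrable semimartingale with decomposition $Y^{\t_k}=Y_0+M^{\t_k}+A^{\t_k}$. The right-hand inequality in \reff{semi-est} then applies to $Y^{\t_k}$ and gives
$$
\dbE^\dbP\Big[|Y_0|^2+\la M\ra_{\t_k}+\Big(\bigvee_0^{\t_k} A\Big)^2\Big]\le C\|Y^{\t_k}\|_\dbP^2.
$$
A short comparison argument, using the partition $\{\t_i\wedge\t_k\}\cup\{T\}$ for $Y$ against an arbitrary partition $\{\t_i\}$ for $Y^{\t_k}$, shows that $\|Y^{\t_k}\|_\dbP\le\|Y\|_\dbP$. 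Sending $k\to\infty$ and applying monotone convergence (since $\la M\ra_{\t_k}\uparrow\la M\ra_T$ and $\bigvee_0^{\t_k}A\uparrow\bigvee_0^T A$) then delivers \reff{Y2}.

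The main technical point to be careful about is the comparison $\|Y^{\t_k}\|_\dbP\le\|Y\|_\dbP$. The key observation is that on $\{\t_i\ge\t_k\}$ the increment $\dbE^\dbP_{\t_i}[Y^{\t_k}_{\t_{i+1}}]-Y^{\t_k}_{\t_i}$ vanishes, while on $\{\t_i<\t_k\}$ it equals $\dbE^\dbP_{\t_i\wedge\t_k}[Y_{\t_{i+1}\wedge\t_k}]-Y_{\t_i\wedge\t_k}$, so the partition-based sum for $Y^{\t_k}$ is dominated pointwise by the partition-based sum for $Y$ along the stopped partition. Everything else---the appeal to Rao's theorem and the monotone passage to the limit---is routine.
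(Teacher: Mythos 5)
Your proof is correct and takes essentially the same route as the paper's: the only-if direction follows from the left inequality of Theorem \ref{thm-semimg-est}, and the if direction goes through Remark \ref{rem-quasimg}, Rao's theorem, and the standard stopping technique combined with the right inequality of Theorem \ref{thm-semimg-est}, which is exactly the step the paper leaves as ``standard'' and you spell out via $\|Y^{\t_k}\|_\dbP\le\|Y\|_\dbP$. The only nitpick is that $Y^{\t_k}$ need not be bounded (the jump of $A$ at $\t_k$ is not controlled by $k$), but it is still a square integrable semimartingale since $|\D A_{\t_k}|=|\D Y_{\t_k}|\le 2\sup_{0\le t\le T}|Y_t|$ and $\|Y\|_{\dbP,0}<\infty$, so your argument goes through unchanged.
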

\begin{proof} By Theorem \ref{thm-semimg-est}, it suffices to prove the if part. Assume $\|Y\|_\dbP<\infty$.  From Remark \ref{rem-quasimg}, $Y$ is a quasimartingale. By Rao's theorem, see e.g. $\cite{Protter}$, Chapter III Theorem 18, $Y = M + A$, where $M$ is a local martingale and $A$ is a predictable process with paths of locally integrable variation. By the standard stopping technique  and by Theorem $\ref{thm-semimg-est}$ again, it is easy to see that $Y$ is indeed a  square-integrable semimartingale.
\end{proof}


\section{Doubly Reflected BSDEs}
\label{sect-DRBSDE}
 \setcounter{equation}{0}

In this section we assume $\dbF$ is generated by a standard Brownian motion $B$ and  augmented with all the $\dbP$-null sets. 
We consider the following Doubly Reflected Backward SDE (DRBSDE, for short) with $\dbF$-progressively measurable solution triplet $(Y, Z, A)$:
\bea
\label{2RBSDE}
\left\{\ba{lll}
\dis Y_t = \xi + \int_t^T f_s(Y_s, Z_s) ds - \int_t^T Z_s dB_s + A_T - A_t;\\
\dis L \le Y \le U,\q [Y_{t-} - L_{t-}] d K^+_t = [U_{t-} - Y_{t-}] d K^-_t=0.
\ea\right.
\eea
Here  $Y\in \dbD(\dbF)$ and $A$ has finite variation with orthogonal decomposition $A = K^+-K^-$. We say $(Y,Z,A)$ satisfying \reff{2RBSDE} is a local solution if
\bea
\label{local}
\sup_{0\le t\le T} |Y_t| + \int_0^T |Z_t|^2 dt + \bigvee_0^T A <\infty,~~\dbP\mbox{-a.s.}
\eea
and a solution if
\bea
\label{2RBSDEnorm}
\|(Y,Z,A)\|^2 :=\dbE^\dbP\Big[\sup_{0\le t\le T}|Y_t|^2 + \int_0^T |Z_t|^2dt + \big(\bigvee_0^T A \big)^2\Big]<\infty.
\eea
Throughout this section, we assume the following standing assumptions:

\begin{assum}
\label{assum-standing}
(i) $\xi$ is $\cF_T$-measurable, $f(\cd,0,0)$ is $\dbF$-progressively measurable, and 
\bea
\label{I0}
I_0^2 := I_0^2(\xi,f):= \dbE^\dbP\Big[|\xi|^2 + \big(\int_0^T|f_t(0,0)|dt\big)^2\Big] <\infty.
\eea

(ii) $f$ is uniformly Lipschitz continuous in $(y,z)$;

 (iii) $L, U \in \dbD(\dbF)$; $L\le U$, $L_T \le \xi \le U_T$; and
 \bea
 \label{LU0}
 \|(L, U)\|_{\dbP,0}^2 := \|L^+\|_{\dbP,0}^2 + \|U^-\|_{\dbP,0}^2<\infty.
 \eea
 \end{assum}

 \begin{rem}
 \label{rem-f}
{\rm  In the standard BSDE literature, one requires $\dbE^\dbP\Big[\int_0^T |f(t,0,0)|^2 dt\Big]<\infty$. Our condition \reff{I0}  is slightly weaker. In fact, most estimates in the BSDE literature can be improved by replacing $\dbE^\dbP\Big[\int_0^T |f(t,0,0)|^2 dt\Big]$ with  $ \dbE^\dbP\Big[\big(\int_0^T|f(t,0,0)|dt\big)^2\Big]$, and the arguments are rather standard. We refer to the Appendix of the monograph \cite{CZ}  for interested readers.
\qed}
 \end{rem}

It is well known that Assumption \ref{assum-standing} does not yield the wellposedness of DRBSDE \reff{2RBSDE}. At below is a simple counterexample.

\begin{eg}
\label{eg-nosolution}
Let $L=U$ be deterministic, {\cad}, and  $\dis\bigvee_0^T L = \infty$. Then DRBSDE \reff{2RBSDE} with $\xi=L_T$ and $f=0$ has no solution.
\end{eg}
\begin{proof}
Assume there is a solution $(Y, Z, A)$. Since $L\le Y\le U$,  one must have $Y = L$, which leads to $Z=0$ and $A=L$. But this contradicts with the assumption that $L$ has infinite variation.
\end{proof}

In the literature, there are two approaches for wellposedness of DRBSDEs. We first report a result from Hamadene, Hassani and Ouknine  \cite{HHO}:
\begin{lem}
\label{lem-HHO}
Let Assumption \ref{assum-standing} hold. Assume further the following separation condition:  
\bea
\label{separate}
L_t < U_t &\mbox{and}& L_{t-} < U_{t-} \q\mbox{for all}~t.
\eea
Then \reff{2RBSDE} admits a local solution.
\end{lem}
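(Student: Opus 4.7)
I would follow the argument of Hamadene--Hassani--Ouknine \cite{HHO}: approximate \reff{2RBSDE} by doubly penalized standard BSDEs and pass to the limit, using the separation condition \reff{separate} to control the two penalization processes path-by-path. That the conclusion is only a \emph{local} solution (rather than a solution in the norm \reff{2RBSDEnorm}) reflects precisely the lack of any a priori $L^2$ control on the reflection processes under the current assumptions; see Example \ref{eg-nosolution}.

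Concretely, for each $n\ge 1$ I would let $(Y^n,Z^n)$ solve the penalized Lipschitz BSDE
\[
Y^n_t \;=\; \xi + \int_t^T \!\Big[f_s(Y^n_s,Z^n_s) + n(L_s-Y^n_s)^+ - n(Y^n_s-U_s)^+\Big]ds - \int_t^T\! Z^n_s\,dB_s,
\]
whose wellposedness is standard, and set $K^{n,+}_t := n\int_0^t (L_s-Y^n_s)^+ds$ and $K^{n,-}_t := n\int_0^t (Y^n_s-U_s)^+ds$. Classical BSDE arguments give comparison-based monotonicity of $Y^n$ in $n$, pointwise convergence $Y^n \to Y$ along this monotonicity, and a.s.\ bounds of the form $L_s - \eps^L_n(s) \le Y^n_s \le U_s + \eps^U_n(s)$ with $\eps^L_n,\eps^U_n \to 0$ a.s. The candidate $(Y,Z,A)$ is then obtained with $Z$ recovered by $L^2_{\mathrm{loc}}$ convergence of $Z^n$ and $A = K^+ - K^-$ from weak-$*$ limits of the reflection processes along a localizing sequence of stopping times.

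\textbf{Where separation enters, and the main obstacle.} The hard part is a uniform (in $n$) pathwise bound on $\bigvee_0^{\t_k} (K^{n,+} + K^{n,-})$ along a localizing sequence of stopping times $\t_k \uparrow T$. Without separation, the two penalization terms could conspire to blow up simultaneously (as would occur if $L$ and $U$ were allowed to coincide on a set of positive measure). Under \reff{separate}, however, the {\cad} regularity of $L,U$ together with the left-limit version of $L<U$ yields, after a suitable stopping time localization, a strict positive gap $U_s - L_s \ge \delta_k > 0$ on $[0,\t_k]$. This positive gap, combined with $L\le Y^n + \eps^L_n$ and $Y^n \le U + \eps^U_n$, rules out for $n$ large the simultaneous positivity of $(L_s-Y^n_s)^+$ and $(Y^n_s-U_s)^+$, which is exactly what is needed to bound the total variation of $K^{n,+} - K^{n,-}$ pathwise on $[0,\t_k]$. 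Once this control is in place, passage to the limit yields a \cad\ process $Y$ with $L\le Y\le U$, the integrability \reff{local} on each localizing interval, and the Skorokhod minimality conditions $[Y_{t-}-L_{t-}]dK^+_t = 0 = [U_{t-}-Y_{t-}]dK^-_t$ via the standard identification argument (using that $K^{n,+}$ grows only where $Y^n<L$, and symmetrically for $K^{n,-}$).
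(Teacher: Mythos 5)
First, note that the paper does not prove this lemma at all: it is quoted verbatim from Hamad\`ene--Hassani--Ouknine \cite{HHO}, with only a remark that a local solution in the sense of \cite{HHO} is a local solution in the sense of \reff{local}. So your proposal is really an attempt to reconstruct the proof of the cited result, and as such it has two concrete gaps. The first is minor but real: with a single penalization parameter $n$ multiplying \emph{both} penalty terms, $Y^n$ is not monotone in $n$ (raising $n$ pushes $Y^n$ up through the lower penalty and down through the upper one), so the claimed "comparison-based monotonicity of $Y^n$ in $n$" fails; one needs a doubly indexed scheme $n(L_s-Y^{n,m}_s)^+ - m(Y^{n,m}_s-U_s)^+$, monotone in each index separately, together with a minimax/diagonal argument.

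The second gap is the decisive one. You correctly identify the crux as a uniform bound on the variation of $K^{n,+}-K^{n,-}$, but the mechanism you offer does not deliver it. Granting that separation plus the sandwich bounds makes $(L_s-Y^n_s)^+$ and $(Y^n_s-U_s)^+$ never simultaneously positive for large $n$, this only shows $dK^{n,+}$ and $dK^{n,-}$ are mutually singular, hence $\bigvee_0^{\t_k}(K^{n,+}-K^{n,-}) = K^{n,+}_{\t_k}+K^{n,-}_{\t_k}$; it gives no bound on this sum. The BSDE itself only controls the \emph{difference} $\dbE^\dbP[K^{n,+}_T-K^{n,-}_T]$, and the classical way to control the sum (It\^o's formula applied to $|Y^n-X|^2$ for a semimartingale $X$ with $L\le X\le U$, exploiting the sign of $(Y^n-X)\,dK^{n,\pm}$) is precisely Mokobodski's condition \reff{Mokobodski}, which is not available here. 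This is exactly why \cite{HHO} (and the paper's own Lemma \ref{lem-apriori}) abandon global penalization and instead build the local solution by concatenating one-barrier reflected BSDEs on the stochastic intervals $[\t_{2i},\t_{2i+1}]$, $[\t_{2i+1},\t_{2i+2}]$ determined by the alternating contact times with $U$ and $L$; the separation condition \reff{separate} enters not to decouple the two penalty terms but to show, via the argument around \reff{taulimit}, that these contact times cannot accumulate before $T$, so the pieces exhaust $[0,T]$ and the concatenated variation is a.s.\ finite (though, consistent with the "local" conclusion, not square integrable). Your sketch as written would stall at the uniform variation estimate.
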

 The condition \reff{separate} is mild and easy to verify, but it does not yield any a priori estimates.  We remark that \cite{HHO} takes a slightly different form of DRBSDEs. But that is mainly for the sake of uniqueness. One can easily check that a local solution in \cite{HHO} is a local solution in our sense, so the existence in Lemma \ref{lem-HHO} is valid.

 We next report a result from Peng and Xu \cite{PX}, following the original work Cvitanic and Karatzas \cite{CK}:

\begin{lem}
\label{lem-PX}
Let Assumption \ref{assum-standing} hold. Assume further the following  Mokobodski's type of  condition:  
\bea
\label{Mokobodski}
\mbox{there exists a square integrable semimartingale  $Y^0$ such that}~ L_t \le Y^0_t \le U_t.
\eea
Then DRBSDE \reff{2RBSDE} admits a unique solution and the following estimate  holds:
\bea
\label{2RBSDE-est0}
\|(Y,Z,A)\|^2  &\le& C\Big[I_0^2 + \|Y^0\|_\dbP^2\Big].
\eea
\end{lem}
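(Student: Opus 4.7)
The plan divides into three main parts: existence/uniqueness, control of $Y - Y^0$, and control of $\bigvee A$.

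For existence and uniqueness, the result essentially restates the classical theorems of Cvitanic--Karatzas \cite{CK} and Peng--Xu \cite{PX}, obtained via penalization under the Mokobodski condition. My plan is to cite those works directly (possibly adjusting for the slightly weaker integrability on $f(\cdot,0,0)$ noted in Remark \ref{rem-f}, which is routine).

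The real work is the norm estimate. First, since $\dbF$ is Brownian, I would decompose the Mokobodski process as $Y^0_t = Y^0_0 + \int_0^t Z^0_s\,dB_s + A^0_t$ and apply Theorem \ref{thm-semimg-est} directly to get
\[
\dbE^\dbP\Big[\sup_{0\le t\le T}|Y^0_t|^2 + \int_0^T|Z^0_s|^2\,ds + \Big(\bigvee_0^T A^0\Big)^2\Big] \le C\|Y^0\|_\dbP^2.
\]
Next I would apply It\^o's formula to $|Y - Y^0|^2$. The key observation, which is the whole point of using the Mokobodski process, is the sign relation
\[
\int_0^T (Y_s - Y^0_s)\,dA_s \le 0.
\]
This comes directly from the flat-off conditions combined with $L\le Y^0\le U$: on $\{dK^+>0\}$ one has $Y=L\le Y^0$, so $(Y - Y^0)\,dK^+\le 0$, and on $\{dK^->0\}$ one has $Y=U\ge Y^0$, so $(Y - Y^0)\,dK^-\ge 0$. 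Together with the Lipschitz property of $f$, Young's inequality to absorb the cross term $\int(Y-Y^0)\,dA^0$ (using the bound on $\bigvee A^0$), and BDG for the stochastic integral $\int(Y-Y^0)(Z-Z^0)\,dB$, a standard energy argument produces
\[
\dbE^\dbP\Big[\sup_t |Y_t - Y^0_t|^2 + \int_0^T|Z_s - Z^0_s|^2\,ds\Big] \le C\big[I_0^2 + \|Y^0\|_\dbP^2\big],
\]
from which the desired bounds on $\|Y\|_{\dbP,0}^2$ and $\dbE^\dbP[\int|Z|^2\,ds]$ follow immediately.

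The delicate step is the estimate on $\bigvee_0^T A$. My plan is to invoke Theorem \ref{thm-semimg-est} in the reverse direction on $Y$ itself, whose finite-variation part is $-A - \int_0^\cdot f\,ds$; this reduces matters to bounding $\|Y\|_\dbP$, and by the triangle inequality $\|Y\|_\dbP \le \|Y-Y^0\|_\dbP + \|Y^0\|_\dbP$ to bounding $\|Y-Y^0\|_\dbP$. For the quasimartingale-type supremum in $\|Y-Y^0\|_\dbP^2$, I would insert the BSDE identity
\[
\dbE^\dbP_{\t_i}[(Y-Y^0)_{\t_{i+1}}] - (Y-Y^0)_{\t_i} = -\dbE^\dbP_{\t_i}\Big[\int_{\t_i}^{\t_{i+1}} f\,ds + (A_{\t_{i+1}} - A_{\t_i}) + (A^0_{\t_{i+1}} - A^0_{\t_i})\Big],
\]
split $|A_{\t_{i+1}} - A_{\t_i}|\le (K^+_{\t_{i+1}}-K^+_{\t_i})+(K^-_{\t_{i+1}}-K^-_{\t_i})$ into monotone pieces, and then reuse the martingale-decomposition trick from Step 1 of the proof of Theorem \ref{thm-semimg-est} to bound these sums by $C\,\dbE^\dbP[(\bigvee A)^2]$ plus terms already controlled by $I_0^2 + \|Y^0\|_\dbP^2$. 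The main obstacle I expect is precisely closing this circular dependence: the bound takes the form $\dbE^\dbP[(\bigvee A)^2]\le C_1[I_0^2+\|Y^0\|_\dbP^2] + C_2\,\dbE^\dbP[(\bigvee A)^2]$, and absorbing $C_2$ requires it to be strictly less than 1, which forces careful tracking of the constants in the Young/BDG estimates. A cleaner alternative, which I would use as a backup if the direct absorption is too tight, is to work throughout with the penalized approximations $(Y^n, Z^n, K^{n,\pm})$: derive the analogous energy bounds uniformly in $n$ (where all processes are smooth), use Theorem \ref{thm-semimg-est} on $Y^n$ to bound $\dbE^\dbP[(K^{n,+}_T + K^{n,-}_T)^2]$ uniformly in $n$, and then pass to the limit by Fatou's lemma.
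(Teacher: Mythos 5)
The paper does not actually prove this lemma: it is quoted from Peng--Xu \cite{PX} and Cvitanic--Karatzas \cite{CK}, and the only step needed beyond the citation is to translate the classical a priori bound, which is stated there in terms of the decomposition $Y^0 = Y^0_0 + M^0 + A^0$ (i.e.\ in terms of $\dbE^\dbP[|Y^0_0|^2 + \la M^0\ra_T + (\bigvee_0^T A^0)^2]$), into the norm $\|Y^0\|_\dbP$ via Theorem \ref{thm-semimg-est}. Your first paragraph plus that one-line conversion is therefore the intended argument, and your energy estimate is also fine: the sign relation $\int (Y_{s-}-Y^0_{s-})\,dA_s\le 0$ is correctly derived from the flat-off conditions and $L\le Y^0\le U$, and it does yield $\dbE^\dbP[\sup_t|Y_t-Y^0_t|^2+\int_0^T|Z-Z^0|^2\,ds]\le C[I_0^2+\|Y^0\|_\dbP^2]$ by the standard $e^{\l t}$/Young/BDG machinery.

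The gap is in your treatment of $\dbE^\dbP[(\bigvee_0^T A)^2]$, and it is not merely a matter of ``careful tracking of constants.'' Your chain is: Theorem \ref{thm-semimg-est} (right inequality) bounds $\dbE^\dbP[(\bigvee_0^T A)^2]$ by $C\|Y\|_\dbP^2$ plus controlled terms; the triangle inequality reduces this to $\|Y-Y^0\|_\dbP^2$; and the quasimartingale sum for $Y-Y^0$ is bounded, via the Step~1 martingale trick, by $C\,\dbE^\dbP[(\int_0^T|f|\,ds+\bigvee_0^T A+\bigvee_0^T A^0)^2]$. Every constant in this loop --- the universal constants of Theorem \ref{thm-semimg-est} and the factor produced by the martingale-difference/Doob argument --- is a fixed constant at least $1$ with no tunable parameter, so the resulting inequality $\dbE^\dbP[(\bigvee A)^2]\le C_1[\cdots]+C_2\,\dbE^\dbP[(\bigvee A)^2]$ has $C_2\ge 1$ and cannot be absorbed. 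Structurally, the problem is that $\|Y-Y^0\|_\dbP$ is \emph{equivalent} to $\dbE^\dbP[(\bigvee_0^T(A+A^0))^2]+\cdots$ by Theorem \ref{thm-semimg-est}, so you are bounding the quantity by itself. The single sign relation $\int(Y-Y^0)\,dA\le 0$ only controls the energy terms; what is genuinely needed, and missing, is a \emph{separate} bound on $K^+_T$ and $K^-_T$ that exploits the flat-off conditions together with $Y^0$ in a finer way --- e.g.\ the comparison with the supermartingale decomposition of $Y^0$ in \cite{CK}, or an alternating-stopping-time argument of the kind the paper itself deploys in Lemma \ref{lem-apriori} (where $\t_{2i+1}:=\inf\{t\ge\t_{2i}: K^+_t>K^+_{\t_{2i}}\}$, etc., and the flat-off conditions identify $Y_{\t_{2i}}=U_{\t_{2i}}$, $Y_{\t_{2i+1}}=L_{\t_{2i+1}}$). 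Your penalization backup inherits exactly the same circularity: bounding $\dbE^\dbP[(K^{n,+}_T+K^{n,-}_T)^2]$ by applying Theorem \ref{thm-semimg-est} to $Y^n$ again requires $\|Y^n\|_\dbP$, which is equivalent to the quantity being bounded. Either supply the genuine $K^\pm$ estimate or, more simply, cite \cite{CK,PX} for the full statement including the estimate and apply Theorem \ref{thm-semimg-est} only to rewrite it in the $\|\cdot\|_\dbP$ norm.
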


However, in those works there is no discussion  on the sufficient conditions for the existence of such $Y^0$.  Our goal in this section is to provide a tractable equivalent condition. In light of the norm $\|.\|_\dbP$ \reff{YP}, we introduce the following norm for the barriers $(L, U)$:
\bea
 \label{norm-LU}
 \|(L, U)\|_{\dbP}^2 &:=&  \|(L, U)\|_{\dbP,0}^2\\
 &+&\sup_\pi \dbE^\dbP\Big[\Big(\sum_{i=0}^{n-1} \big([\dbE^\dbP_{\t_i}(L_{\t_{i+1}})  - U_{\t_i}]^++[L_{\t_i} - \dbE^\dbP_{\t_i}(U_{\t_{i+1}})]^+\big)\Big)^2\Big],\nonumber
 \eea
 where the supremum is again taken over all partitions $\pi: 0=\t_0\le \cds\le\t_n=T$. Our main result of this section is:
 
 \begin{thm}
 \label{thm-DRBSDE}
 Let Assumption \ref{assum-standing} hold. Then the following are equivalent:
 
 (i) The DRBSDE \reff{2RBSDE} admits a unique solution $(Y,Z,A)$;
 
 (ii) the Mokobodski condition \reff{Mokobodski} holds;
 
 (iii)  $\|(L, U)\|_{\dbP}<\infty$.
 
 \no Moreover, in this case we have the estimate:
 \bea
\label{2RBSDE-est1}
\|(Y,Z,A)\|^2 &\le& C\Big[I_0^2 +\|(L, U)\|_{\dbP}^2\Big].
\eea
\end{thm}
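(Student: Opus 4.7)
My plan is to close the cycle (i) $\Rightarrow$ (ii) $\Rightarrow$ (iii) $\Rightarrow$ (ii) $\Rightarrow$ (i), reading off the quantitative bound \reff{2RBSDE-est1} from the estimate in Lemma \ref{lem-PX} together with a norm comparison $\|Y^0\|_\dbP \le C[I_0 + \|(L,U)\|_\dbP]$ extracted from the constructive step. The implication (i) $\Rightarrow$ (ii) is immediate: if $(Y,Z,A)$ solves \reff{2RBSDE} with $\|(Y,Z,A)\|<\infty$, then $Y = Y_0 + \int_0^\cdot Z\,dB + (A - \int_0^\cdot f\,ds)$ is itself a square integrable semimartingale with $L \le Y \le U$, so one may take $Y^0 := Y$. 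The converse (ii) $\Rightarrow$ (i), together with the bound $\|(Y,Z,A)\|^2 \le C[I_0^2 + \|Y^0\|_\dbP^2]$, is exactly Lemma \ref{lem-PX}.

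For (ii) $\Rightarrow$ (iii) I would use a direct squeeze. The bounds $L^+ \le |Y^0|$ and $U^- \le |Y^0|$ follow from $L \le Y^0 \le U$, giving $\|(L,U)\|_{\dbP,0} \le 2\|Y^0\|_{\dbP,0}$. For any partition $\pi: 0 = \tau_0 \le \cdots \le \tau_n = T$, the inequalities $L_{\tau_{i+1}} \le Y^0_{\tau_{i+1}}$ and $Y^0_{\tau_i} \le U_{\tau_i}$ propagate through conditional expectation to yield
\begin{eqnarray*}
[\dbE^\dbP_{\tau_i}(L_{\tau_{i+1}}) - U_{\tau_i}]^+ + [L_{\tau_i} - \dbE^\dbP_{\tau_i}(U_{\tau_{i+1}})]^+ \le |\dbE^\dbP_{\tau_i}(Y^0_{\tau_{i+1}}) - Y^0_{\tau_i}|,
\end{eqnarray*}
so summing in $i$, squaring, taking expectation, and invoking Theorem \ref{thm-semimg-est} gives $\|(L,U)\|_\dbP \le C\|Y^0\|_\dbP < \infty$.

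The bulk of the work is (iii) $\Rightarrow$ (ii), where I must produce a square integrable semimartingale $Y^0 \in [L,U]$ with the quantitative bound $\|Y^0\|_\dbP^2 \le C[I_0^2 + \|(L,U)\|_\dbP^2]$. My approach is double penalization: for each $n \ge 1$ let $(Y^n, Z^n)$ be the unique solution of the Lipschitz BSDE
\begin{eqnarray*}
Y^n_t = \xi + \int_t^T f_s(Y^n_s, Z^n_s)\,ds + n\int_t^T (L_s - Y^n_s)^+\,ds - n\int_t^T (Y^n_s - U_s)^+\,ds - \int_t^T Z^n_s\,dB_s,
\end{eqnarray*}
with $A^n_t := n\int_0^t [(L_s - Y^n_s)^+ - (Y^n_s - U_s)^+]\,ds$. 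The central estimate is that $\|Y^n\|_\dbP$ is uniformly bounded by $C[I_0 + \|(L,U)\|_\dbP]$: for any partition $\pi$, the increment $\dbE^\dbP_{\tau_i}(Y^n_{\tau_{i+1}}) - Y^n_{\tau_i}$ splits into a driver part controlled by $I_0$ and a conditional increment of $A^n$, whose cumulative conditional absolute value must be compared against the penalty sums in \reff{norm-LU}; intuitively, on a subinterval where $Y^n$ lies below $L$ the lower penalty pushes it up by at most the "gap" $[L_{\tau_i} - \dbE^\dbP_{\tau_i}(U_{\tau_{i+1}})]^+$ before the upper barrier becomes the binding obstruction, and symmetrically for the upper penalty. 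Granted this bound, Theorem \ref{thm-semimg-est} yields uniform control on $\dbE^\dbP[\la M^n\ra_T + (\bigvee_0^T A^n)^2]$, and the standard monotonicity of double penalization produces a limit $Y^0 \in [L,U]$ inheriting the bound; substituting $\|Y^0\|_\dbP \le C[I_0 + \|(L,U)\|_\dbP]$ into \reff{2RBSDE-est0} then delivers \reff{2RBSDE-est1}.

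The main obstacle is the uniform a priori bound on $\|Y^n\|_\dbP$: penalization estimates for DRBSDEs in the literature already assume Mokobodski, whereas here I want to bootstrap Mokobodski from $\|(L,U)\|_\dbP$ itself. The delicate point is that a subinterval of lower-barrier push must be matched with an upper-barrier obstruction for the accumulated penalty mass to be paid for, and handling this requires a careful stopping-time decomposition of each partition so that every segment of $A^n$ is absorbed by exactly one of the two expressions inside \reff{norm-LU}, modulo a martingale remainder that is handled as in the proof of Theorem \ref{thm-semimg-est}. Once this is in place, the rest of the argument (monotone passage to the limit, verification that $Y^0$ is \cad\ with $L \le Y^0 \le U$, and closing the quantitative inequality) is essentially standard.
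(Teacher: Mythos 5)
The cycle you set up is sound, and your treatments of (i) $\Leftrightarrow$ (ii) and of (ii) $\Rightarrow$ (iii) (the pointwise squeeze $[\dbE^\dbP_{\t_i}(L_{\t_{i+1}})-U_{\t_i}]^+ + [L_{\t_i}-\dbE^\dbP_{\t_i}(U_{\t_{i+1}})]^+ \le |\dbE^\dbP_{\t_i}(Y^0_{\t_{i+1}})-Y^0_{\t_i}|$) coincide with the paper's argument. The problem is (iii) $\Rightarrow$ (ii), which is the entire mathematical content of the theorem, and there your proof has a genuine gap: the uniform bound $\|Y^n\|_\dbP \le C[I_0 + \|(L,U)\|_\dbP]$ for the doubly penalized solutions is asserted with only an ``intuitively'' in front of it, and the one place where you acknowledge the difficulty (``the delicate point is that a subinterval of lower-barrier push must be matched with an upper-barrier obstruction'') is exactly the step that is not carried out. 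For the penalized, non-reflected process $Y^n$ there is no a priori reason it sits on either barrier when the penalty is active, so the accumulated penalty mass $n\int (L_s-Y^n_s)^+ds$ over a subinterval is not visibly dominated by a gap of the form $[\dbE^\dbP_{\t_i}(L_{\t_{i+1}})-U_{\t_i}]^+$; making that comparison rigorous is precisely as hard as the theorem itself. A second, independent problem is your appeal to ``the standard monotonicity of double penalization'': with a single index $n$ multiplying both penalties, increasing $n$ pushes $Y^n$ up through the lower penalty and down through the upper one, so the comparison theorem gives no monotonicity of $Y^n$ in $n$; the classical schemes either use two indices with iterated limits or a Dynkin-game representation, and in either case one again needs the Mokobodski-type bound you are trying to establish.

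The paper circumvents all of this by a different route. Under the strict separation condition $L<U$, $L_-<U_-$ it invokes the local existence result of Hamad\`ene--Hassani--Ouknine to get a genuine local solution $(Y^0,Z^0,A^0)$ of the reflected equation with $f=0$, and then proves the a priori estimate directly on that local solution (Lemma \ref{lem-apriori}) via an alternating sequence of stopping times $\t_{2i+1}=\inf\{t\ge\t_{2i}: K^+_t>K^+_{\t_{2i}}\}$, $\t_{2i+2}=\inf\{t\ge\t_{2i+1}: K^-_t>K^-_{\t_{2i+1}}\}$. Because the reflected solution actually touches the barriers at these times ($Y_{\t_{2i}}=U_{\t_{2i}}$, $Y_{\t_{2i+1}}=L_{\t_{2i+1}}$ on the relevant events), one gets the clean inequality $0\le \dbE^\dbP_{\t_{2i}}[K^-_{\t_{2i+1}}]-K^-_{\t_{2i}}\le [\dbE^\dbP_{\t_{2i}}(L_{\t_{2i+1}})-U_{\t_{2i}}]^+$, which is exactly what the norm $\|(L,U)\|_\dbP$ controls; the martingale-remainder bookkeeping then proceeds as in Theorem \ref{thm-semimg-est}, with separate steps for local (unbounded) solutions and for jumps of the barriers. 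The general case is recovered by perturbing to $U^n:=U+\frac1n$ and using the stability estimate of Lemma \ref{lem-difference} to show the resulting solutions form a Cauchy sequence. If you want to keep a penalization approach you would need to supply the uniform $\|\cdot\|_\dbP$-bound on $Y^n$ from scratch, and the natural way to do so is to run essentially the paper's stopping-time argument on the limit anyway; as written, your proof does not establish (iii) $\Rightarrow$ (ii).
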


In addition, we have the following estimates for the difference of two DRBSDEs:

\begin{thm}
\label{thm-difference}
Assume $(\xi_i, f^i, L^i, U^i)$, $i=1,2$, satisfy all the conditions in Theorem \ref{thm-DRBSDE}, and let $(Y^i, Z^i, A^i)$ denote the solution to the corresponding DRBSDE \reff{2RBSDE}. Denote $\d Y:= Y^1-Y^2$, and similarly for the other notations. Then
\bea
\label{norm-est2}
&&\dbE^\dbP\Big[\sup_{0\le t\le T}[|\d Y_t|^2 + |\d A_t|^2] + \int_0^T |\d Z_t|^2dt\Big]\nonumber \\
&\le& C\dbE^\dbP\Big[|\d \xi|^2 + \Big(\int_0^T |\d f (t, Y^1_t, Z^1_t)|dt\Big)^2\Big]\\
&&+C\sum_{i=1}^2\Big[ I_0(\xi_i, f^i) + \|(L^i, U^i)\|_\dbP\Big]  \Big(\dbE^\dbP\Big[\sup_{0\le t\le T}[|\d L_t|^2 + |\d U_t|^2]\Big] \Big)^{1\over 2} .\nonumber
\eea
\end{thm}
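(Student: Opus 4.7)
\textbf{Proof proposal for Theorem \ref{thm-difference}.}

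My plan is to apply It\^o's formula to $|\d Y|^2 = |Y^1 - Y^2|^2$ and then combine the standard BSDE machinery (Lipschitz continuity of $f$, BDG, Gronwall) with a careful handling of the reflection cross term. Concretely, writing $\d f_t := f^1_t(Y^1_t,Z^1_t) - f^2_t(Y^2_t,Z^2_t)$ and $\d A = (K^{+,1}-K^{+,2}) - (K^{-,1}-K^{-,2})$, It\^o gives
\beaa
|\d Y_t|^2 + \int_t^T |\d Z_s|^2 ds = |\d\xi|^2 + 2\int_t^T \d Y_s \, \d f_s \, ds - 2\int_t^T \d Y_s \, \d Z_s \, dB_s + 2\int_t^T \d Y_{s-} \, d(\d A_s).
\eeaa
The Lipschitz property of $f^i$ lets me absorb $2\int \d Y \, \d f\, ds$ into $\int |\d Z|^2 ds$ plus $C\int |\d Y|^2 ds$ plus the $(\int |\d f(\cd, Y^1,Z^1)|dt)^2$ piece; the stochastic integral is handled in sup-norm by BDG; a Gronwall step then yields the sup estimate on $|\d Y|^2 + \int|\d Z|^2 dt$, conditional on controlling the reflection term.

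The key step is the reflection cross term. Using the Skorokhod conditions $(Y^i_{t-} - L^i_{t-})dK^{+,i}_t = 0$ and $(U^i_{t-} - Y^i_{t-})dK^{-,i}_t = 0$, together with the inequalities $L^i \le Y^i \le U^i$, I compute term by term:
\beaa
\int \d Y \, dK^{+,1} &=& \int (L^1 - Y^2)\, dK^{+,1} \;\le\; \int \d L \, dK^{+,1},\\
-\int \d Y \, dK^{-,1} &=& -\int (U^1 - Y^2)\, dK^{-,1} \;\le\; -\int \d U \, dK^{-,1},
\eeaa
and symmetric bounds for $dK^{\pm,2}$. Summing,
\beaa
\int_0^T \d Y_{s-} \, d(\d A_s) \;\le\; \sup_{0\le t\le T}|\d L_t| \cdot \Big[\bigvee_0^T K^{+,1} + \bigvee_0^T K^{+,2}\Big] + \sup_{0\le t\le T}|\d U_t| \cdot \Big[\bigvee_0^T K^{-,1} + \bigvee_0^T K^{-,2}\Big].
\eeaa
Applying Cauchy-Schwarz and invoking Theorem \ref{thm-DRBSDE} for each $i$ to bound $\dbE^\dbP[(\bigvee_0^T A^i)^2]^{1/2} \le C[I_0(\xi_i,f^i) + \|(L^i,U^i)\|_\dbP]$ produces precisely the last term on the right side of \reff{norm-est2}.

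Finally, having estimated $\dbE^\dbP[\sup|\d Y|^2 + \int|\d Z|^2 dt]$, I recover the bound on $\sup|\d A_t|^2$ from the BSDE identity
\beaa
\d A_t = \d Y_0 - \d Y_t - \int_0^t \d f_s \, ds + \int_0^t \d Z_s \, dB_s,
\eeaa
controlling the stochastic integral by BDG and the remaining pieces directly. The main obstacle I expect is the reflection cross term: one must use the Skorokhod conditions in both directions simultaneously, and it is crucial that each of the four summands admits the \emph{right} sign so that the final bound involves only $\d L$ and $\d U$ (not $\d Y$ or $\d A$ on the right). All remaining steps are routine BSDE estimates combined with the a priori bound from Theorem \ref{thm-DRBSDE}.
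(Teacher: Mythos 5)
Your proposal is correct and follows essentially the same route as the paper: the paper first proves the estimate with $\sum_i\|(Y^i,Z^i,A^i)\|$ in place of $\sum_i[I_0(\xi_i,f^i)+\|(L^i,U^i)\|_\dbP]$ (Lemma \ref{lem-difference}, via It\^o on a weighted $|\d Y|^2$, the same sign analysis of the reflection cross term through the Skorokhod conditions and $L^i\le Y^i\le U^i$, BDG, and recovery of $\d A$ from the equation), and then substitutes the a priori bound of Theorem \ref{thm-DRBSDE} exactly as you do. The only cosmetic difference is that you use Gronwall where the paper uses the exponential weight $e^{\l t}$.
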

These two theorems will be proved in the rest of this section. We first note that

 \begin{rem}
 \label{rem-onebarrier}
 {\rm (i) In the case that there is only one barrier $L$, we may view it as $U = \infty$. One can check straightforwardly that $\|(L, U)\|_{\dbP} = \|L^+\|_{\dbP,0}$. Then Theorems \ref{thm-DRBSDE} and \ref{thm-difference} reduce to standard results for reflected BSDEs with one barrier, see El Karoui et al \cite{EKPPQ}. 
 
 (ii) In the case $(L^1, U^1) = (L^2, U^2)$, the last term in \reff{norm-est2} vanishes and  Peng and Xu \cite{PX} has already obtained the estimate. 
  \qed}
 \end{rem}

\subsection{Proof of Theorem \ref{thm-difference}.} 

As usual we start with some   a priori estimates.
\begin{lem}
 \label{lem-difference}
 Assume $(\xi_i, f^i, L^i, U^i)$, $i=1,2$, satisfy Assumption \ref{assum-standing}.  If the corresponding DRBSDE \reff{2RBSDE} has a solution $(Y^i, Z^i, A^i)$, then
 \bea
 \label{norm-est3}
 \dbE^\dbP\Big[\sup_{0\le t\le T} [|\d Y_t|^2 + |\d A_t|^2] + \int_0^T |\d Z_t|^2dt\Big]\le CI^2,
 \eea
 where, recalling the norm $\|(Y,Z,A)\|$ defined by \reff{2RBSDEnorm},
 \bea
 \label{I2}
I^2 &:=& \dbE^\dbP\Big[|\d \xi|^2 + \Big(\int_0^T |\d f(t, Y^1_t, Z^1_t)|dt\Big)^2\Big] \\
&+&\sum_{i=1}^2\|(Y^i, Z^i, A^i)\|  \Big(\dbE^\dbP\Big[\sup_{0\le t\le T}[|\d L_t|^2 + |\d U_t|^2]\Big] \Big)^{1\over 2} .\nonumber
 \eea
\end{lem}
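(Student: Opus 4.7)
\textbf{Proof plan for Lemma \ref{lem-difference}.} The plan is to apply It\^o's formula to $|\d Y|^2$, where $\d Y$ solves the BSDE
$$\d Y_t = \d\xi + \int_t^T \d f_s \, ds - \int_t^T \d Z_s \, dB_s + \d A_T - \d A_t, \q \d f_s := f^1(s,Y^1_s,Z^1_s)-f^2(s,Y^2_s,Z^2_s),$$
to obtain
$$|\d Y_t|^2 + \int_t^T |\d Z_s|^2 ds = |\d\xi|^2 + 2\int_t^T \d Y_s \,\d f_s \, ds - 2\int_t^T \d Y_s \,\d Z_s \, dB_s + 2\int_t^T \d Y_{s-}\, d\d A_s.$$
The drift integral is standard: writing $\d f_s = \d f(s,Y^1_s,Z^1_s) + [f^2(s,Y^1_s,Z^1_s)-f^2(s,Y^2_s,Z^2_s)]$ and using the Lipschitz property of $f^2$ together with Young's inequality absorbs the $|\d Y|^2$ and $|\d Z|^2$ contributions into the left side, at the cost of introducing $\dbE^\dbP[\,|\d\xi|^2 + (\int_0^T|\d f(s,Y^1_s,Z^1_s)|ds)^2\,]$ on the right.

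The main obstacle is controlling the term $\int_t^T \d Y_{s-} d\d A_s$, since $\d A$ is not monotone and the flat-off conditions for the two DRBSDEs are tied to \emph{different} barriers. Decomposing $A^i = K^{i,+}-K^{i,-}$ and exploiting each Skorohod condition separately is the key. On $\{dK^{1,+}>0\}$ one has $Y^1_{s-}=L^1_{s-}$, so $\d Y_{s-} = L^1_{s-}-Y^2_{s-}\le L^1_{s-}-L^2_{s-}=\d L_{s-}$; the analogous inequalities for $K^{1,-},K^{2,+},K^{2,-}$ (each using only $L^j\le Y^j\le U^j$ for the other index $j$) yield
$$\int_t^T \d Y_{s-} d\d A_s \le \sup_{0\le s\le T}|\d L_s|\bigl(K^{1,+}_T+K^{2,+}_T\bigr)+\sup_{0\le s\le T}|\d U_s|\bigl(K^{1,-}_T+K^{2,-}_T\bigr).$$

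Next I take expectations, handle $\int \d Y \, \d Z \, dB$ by standard BDG and localization, and apply Cauchy--Schwarz to the barrier term, noting $K^{i,\pm}_T\le \bigvee_0^T A^i$, whose $L^2$-norm is dominated by $\|(Y^i,Z^i,A^i)\|$. This produces exactly the cross term $\sum_i \|(Y^i,Z^i,A^i)\|(\dbE^\dbP[\sup|\d L|^2+\sup|\d U|^2])^{1/2}$ that appears in $I^2$. Combining with the drift estimate and Gronwall, and then taking $\sup_t$ and reapplying BDG, yields $\dbE^\dbP[\sup_t|\d Y_t|^2 + \int_0^T|\d Z_s|^2 ds]\le CI^2$.

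Finally, the estimate for $\sup_t|\d A_t|^2$ follows from rewriting the DRBSDE in forward form as
$$\d A_t = \d Y_0-\d Y_t-\int_0^t \d f_s \, ds +\int_0^t \d Z_s \, dB_s,$$
applying BDG to the stochastic integral, and using the Lipschitz bound on $\d f_s$ together with the bounds already derived for $\sup|\d Y|$ and $\int|\d Z|^2$. This delivers \reff{norm-est3}.
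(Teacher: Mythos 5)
Your proposal is correct and follows essentially the same route as the paper's proof: It\^o's formula on $|\d Y|^2$ (the paper uses the weight $e^{\l t}$ where you invoke Gronwall, which is equivalent), the orthogonal decomposition $A^i=K^{i,+}-K^{i,-}$ with the Skorohod conditions to bound $\int \d Y_{s-}\,d\d A_s$ by $\sup[|\d L|+|\d U|]$ times the total variations, Cauchy--Schwarz to produce the cross term in $I^2$, BDG for the martingale term, and the forward form for $\d A$. No gaps.
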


\begin{proof} Let $\l>0$ be a constant which will be specified later. Applying It\^{o}'s formula on $e^{\l t}|\d Y_t|^2$ we have
\bea
\label{diff-est1}
&& e^{\l t}|\d Y_t|^2 + \l\int_t^T e^{\l s} |\d Y_s|^2 ds + \int_t^T e^{\l s}| \d Z_s|^2 ds\\
 &=& e^{\l T}|\d \xi^2| + 2\int_t^T e^{\l s}\d Y_s \big( f^1(s,Y^1_s,Z^1_s) - f^2(s,Y^2_s,Z^2_s)\big)ds + 2\int_t^T e^{\l s}\d Y_{s-} d\d A_s \nonumber\\
 &&- 2\int_t^T e^{\l s}\d Y_s \d Z_s dB_s.\nonumber
\eea
For any $\e>0$, note that
\bea
\label{diff-est2}
&&  2\int_t^T e^{\l s}|\d Y_s| \big| f^1(s,Y^1_s,Z^1_s) - f^2(s,Y^2_s,Z^2_s)\big|ds \nonumber\\
&\le& C\int_t^T e^{\l s}|\d Y_s|[|\d f (s,Y^1_s,Z^1_s)| + |\d Y_s| + |\d Z_s|] ds\nonumber\\
&\le& C\Big[\sup_{t\le s\le T}|\d Y_s| \int_t^T e^{\l s} |\d f (s,Y^1_s,Z^1_s)|ds + \int_t^T e^{\l s}[|\d Y_s|^2 + |\d Y_s||\d Z_s|] ds\Big]\nonumber\\
&\le& \e \sup_{t\le s\le T}|\d Y_s| ^2 + {1\over 2}\int_t^T e^{\l s} |\d Z_s|^2 ds  \\
&&\qq + C\int_t^T e^{\l s} |\d Y_s|^2 ds+ C\e^{-1}\big(\int_t^T e^{\l s} |\d f (s,Y^1_s,Z^1_s)|ds \big)^2;\nonumber
\eea
and, with the orthogonal decompositions $A^i = K^{i,+}-K^{i-}$,
\bea
\label{diff-est3}
&& 2\int_t^T e^{\l s}\d Y_{s-} d\d A_s\nonumber\\
&=& 2\int_t^T e^{\l s}\big(Y^1_{s-} dK^{1,+}_s  - Y^1_{s-} dK^{1,-}_s - Y^2_{s-} dK^{1,+}_s + Y^2_{s-} dK^{1,-}_s\nonumber \\
&&\qq - Y^1_{s-} dK^{2,+}_s + Y^1_{s-} dK^{2,-}_s + Y^2_{s-} dK^{2,+}_s - Y^2_{s-}d K^{2,-}_s \big)\nonumber\\
&\le&  2\int_t^T e^{\l s}\big(L^1_{s-} dK^{1,+}_s  - U^1_{s-} dK^{1,-}_s - L^2_{s-} dK^{1,+}_s + U^2_{s-} dK^{1,-}_s\nonumber \\
&&\qq - L^1_{s-} dK^{2,+}_s + U^1_{s-} dK^{2,-}_s + L^2_{s-} dK^{2,+}_s - U^2_{s-}d K^{2,-}_s \big)\nonumber\\
&=& 2\int_t^T e^{\l s}\big(\d L_{s-} dK^{1,+}_s  - \d U_{s-} dK^{1,-}_s    -\d L_{s-} dK^{2,+}_s + \d U_{s-} dK^{2,-}_s \big)\nonumber\\
&\le& 2e^{\l (T-t)}\sup_{0\le s\le T} [|\d L_s| + |\d U_s|] \big[\bigvee_t^T A^1 + \bigvee_t^T A^2\big] .
\eea
Plug \reff{diff-est2} and \reff{diff-est3} into \reff{diff-est1}, we obtain
\beaa
&& e^{\l t}|\d Y_t|^2 + \l\int_t^T e^{\l s} |\d Y_s|^2 ds + \int_t^T e^{\l s}| \d Z_s|^2 ds\nonumber\\
 &\le& e^{\l T}|\d \xi^2| + \e \sup_{t\le s\le T}|\d Y_s| ^2 + {1\over 2}\int_t^T e^{\l s} |\d Z_s|^2 ds  \\
&& + C\int_t^T e^{\l s} |\d Y_s|^2 ds+ C\e^{-1}\big(\int_t^T e^{\l s} |\d f (s,Y^1_s,Z^1_s)|ds \big)^2\nonumber\\
&&+ 2e^{\l (T-t)}\sup_{0\le s\le T} [|\d L_s| + |\d U_s|] \big[\bigvee_t^T A^1 + \bigvee_t^T A^2\big] - 2\int_t^T e^{\l s}\d Y_s \d Z_s dB_s.\nonumber
\eeaa
Set $\l =C$ for the above $C$, we get
\bea
\label{diff-est4}
&& e^{\l t}|\d Y_t|^2 + {1\over 2}\int_t^T e^{\l s}| \d Z_s|^2 ds\nonumber\\
 &\le& e^{\l T}|\d \xi^2| + \e \sup_{t\le s\le T}|\d Y_s| ^2 + C\e^{-1}\big(\int_t^T e^{\l s} |\d f (s,Y^1_s,Z^1_s)|ds \big)^2\\
&&+ 2e^{\l (T-t)}\sup_{0\le s\le T} [|\d L_s| + |\d U_s|] \big[\bigvee_t^T A^1 + \bigvee_t^T A^2\big] - 2\int_t^T e^{\l s}\d Y_s \d Z_s dB_s.\nonumber
\eea
Take expectation on both sides, we have
\bea
\label{diff-est5}
\sup_{0\le t\le T} \dbE^\dbP[|\d Y_t|^2] + \dbE^\dbP\Big[ \int_0^T | \d Z_t|^2 dt\Big]\leq C[1+\e^{-1}]I^2 + \e \dbE^\dbP\Big[\sup_{0\le t\le T}|\d Y_t| ^2\Big].
 \eea
Moreover, by \reff{diff-est4} we have
\bea
\label{diff-est6}
&&\sup_{0\le t\le T} e^{\l t}|\d Y_t|^2 \nonumber\\
 &\le& e^{\l T}|\d \xi^2| + \e \sup_{0\le t\le T}|\d Y_t| ^2 + C\e^{-1}\big(\int_0^T e^{\l t} |\d f (t,Y^1_t,Z^1_t)|dt \big)^2\\
&&+ 2e^{\l T}\sup_{0\le t\le T} [|\d L_t| + |\d U_t|] \big[\bigvee_0^T A^1 + \bigvee_0^T A^2\big] +2\sup_{0\le t\le T}\Big|\int_t^T e^{\l s}\d Y_s \d Z_s dB_s\Big|.\nonumber
\eea
Apply the Burkholder-Davis-Gundy Inequality and note that $\l=C$,  we get
\bea
\label{diff-est7}
&&\dbE^\dbP\Big[\sup_{0\le t\le T}\Big|\int_t^T e^{\l s}\d Y_s \d Z_s dB_s\Big|\Big] \le C\dbE^\dbP\Big[\Big(\int_0^T |\d Y_t \d Z_t|^2 dt\Big)^{1\over 2}\Big]\nonumber\\
&&\qq\qq \le C\dbE^\dbP\Big[\sup_{0\le t\le T}|\d Y_t| \Big(\int_0^T |\d Z_t|^2 dt\Big)^{1\over 2}\Big]\\
&&\qq\qq \le \sqrt{\e} \dbE^\dbP\Big[\sup_{0\le t\le T}|\d Y_t|^2\Big] + C\e^{-{1\over 2}} \dbE^\dbP\Big[\int_0^T |\d Z_t|^2 dt\Big].\nonumber
\eea
Take expectation on both sides of \reff{diff-est6},  and apply \reff{diff-est7} and then \reff{diff-est5}, we obtain
\beaa
\dbE^\dbP\Big[\sup_{0\le t\le T} |\d Y_t|^2\Big]&\le& C[1+\e^{-1}]I^2 + C\e \dbE^\dbP\Big[\sup_{0\le t\le T}|\d Y_t| ^2\Big]\\
&& + C\sqrt{\e} \dbE^\dbP\Big[\sup_{0\le t\le T}|\d Y_t|^2\Big] + C\e^{-{1\over 2}}\dbE^\dbP\Big[\int_0^T |\d Z_t|^2 dt\Big]\\
&\le& C\big[\sqrt{\e} + \e(1+\e^{-{1\over 2}})\big] \dbE^\dbP\Big[\sup_{0\le t\le T}|\d Y_t|^2\Big] + C[1+\e^{-{1\over 2}}][1+\e^{-1}]I^2\\
&\le& C\sqrt{\e} \dbE^\dbP\Big[\sup_{0\le t\le T}|\d Y_t|^2\Big] + C\e^{-{3\over 2}}I^2
\eeaa
Set $\e:= {1\over 4C^2}$ for the above $C$. Then 
\beaa
\dbE^\dbP\Big[\sup_{0\le t\le T} |\d Y_t|^2\Big]\le CI^2 .
\eeaa
Plug this into \reff{diff-est5}, we get
\beaa
\dbE^\dbP\Big[\int_0^T |\d Z_t|^2dt\Big]\le CI^2 .
\eeaa
Finally, notice that
\beaa
\d A_t = \d Y_0 - \d Y_t - \int_0^t [f^1(s, Y^1_s, Z^1_s) - f^2(s, Y^2_s, Z^2_s)] ds + \int_0^t \d Z_s dB_s.
\eeaa
One can easily get the estimate for $\d A$.
\end{proof}

\no {\bf Proof of Theorem \ref{thm-difference}.} This is a direct consequence of Lemma \ref{lem-difference} and Theorem \ref{thm-DRBSDE}.\qed

\ms

We emphasize that in next subsection, we shall prove Theorem \ref{thm-DRBSDE} by using Lemma \ref{lem-difference}, but without using Theorem \ref{thm-difference}. So there is no danger of cycle proof.

\subsection{Proof of Theorem \ref{thm-DRBSDE}}

Again, we start with a priori estimate. Recall Lemma \ref{lem-HHO}.
\begin{lem}
\label{lem-apriori}
Let Assumption \ref{assum-standing}  and \reff{separate} hold, and $f=0$. Then the local solution $(Y,Z,A)$ of DRBSDE \reff{2RBSDE} satisfies  \reff{2RBSDE-est1}.
\end{lem}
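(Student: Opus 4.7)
The plan is to use Theorem~\ref{thm-semimg-est} as the pivot: if we can bound the quasimartingale-type norm $\|Y\|_\dbP$ by $\|(L,U)\|_\dbP + I_0$, then that theorem immediately yields $\dbE^\dbP[\la M\ra_T + (\bigvee_0^T A)^2] \le C[\|(L,U)\|_\dbP^2 + I_0^2]$, after which a standard It\^o/BDG computation controls $\dbE^\dbP[\sup_t|Y_t|^2]$ and $\dbE^\dbP[\int_0^T |Z_t|^2 dt]$ and closes \reff{2RBSDE-est1}.

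First, to handle the $Z$-integral and $\sup|Y|$ pieces, I would apply It\^o's formula to $|Y_t|^2$. Using $dY_t = Z_t dB_t - dA_t$, $d\la Y\ra_t = |Z_t|^2 dt$, and the reflection identities $Y_{s-} dK^+_s = L_{s-} dK^+_s$ and $Y_{s-} dK^-_s = U_{s-} dK^-_s$, one obtains
\[
|Y_0|^2 + \int_0^T |Z_s|^2 ds = |\xi|^2 - 2\int_0^T Y_{s-} Z_s dB_s + 2\int_0^T L_{s-} dK^+_s - 2\int_0^T U_{s-} dK^-_s.
\]
Bounding $L_{s-} \le L^+_{s-}$ and $-U_{s-} \le U^-_{s-}$, taking expectation, and applying Cauchy--Schwarz, the two reflection integrals are controlled by $\|(L,U)\|_{\dbP,0}\cdot \dbE^\dbP[(\bigvee_0^T A)^2]^{1/2}$. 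An analogous BDG argument on $\sup_t|Y_t|^2$ yields the same kind of bound.

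Second (the crux), I would prove $\|Y\|_\dbP \le C(\|(L,U)\|_\dbP + I_0)$. Since $f=0$, for any partition $\pi: \t_0 \le \cds \le \t_n$ the martingale part cancels in the conditional difference and $\dbE^\dbP_{\t_i}[Y_{\t_{i+1}}] - Y_{\t_i} = \dbE^\dbP_{\t_i}[A_{\t_{i+1}} - A_{\t_i}]$. The separation condition \reff{separate} guarantees that $dK^+$ and $dK^-$ have disjoint supports, so each $\dbE^\dbP_{\t_i}[A_{\t_{i+1}} - A_{\t_i}]$ splits cleanly into a $K^+$-contribution and a $K^-$-contribution. Using the sandwich $L \le Y \le U$ together with the Skorokhod contact conditions, one gets pointwise dominations such as $\dbE^\dbP_{\t_i}[Y_{\t_{i+1}}] - Y_{\t_i} \le \dbE^\dbP_{\t_i}[U_{\t_{i+1}}] - L_{\t_i}$ on the typical event where $K^+$ drives the increment (and the symmetric bound on the $K^-$ side). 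After a case analysis that separates the Mokobodski-consistent regime from its violation, only the positive parts $[\dbE^\dbP_{\t_i}(L_{\t_{i+1}}) - U_{\t_i}]^+$ and $[L_{\t_i} - \dbE^\dbP_{\t_i}(U_{\t_{i+1}})]^+$ survive in the final bound, with residual terms that can be absorbed via Young's inequality. Summing, squaring, and taking expectation produces exactly the second component of $\|(L,U)\|_\dbP^2$ in \reff{norm-LU}.

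Combining the two steps: the bound on $\|Y\|_\dbP$ together with Theorem~\ref{thm-semimg-est} yields $\dbE^\dbP[\la M\ra_T + (\bigvee_0^T A)^2] \le C[\|(L,U)\|_\dbP^2 + I_0^2]$; feeding this back into the It\^o/BDG inequalities from the first step and absorbing quadratic terms delivers \reff{2RBSDE-est1}. I expect the main technical obstacle to be the second step: the naive bound $|\dbE^\dbP_{\t_i}[A_{\t_{i+1}} - A_{\t_i}]| \le \dbE^\dbP_{\t_i}[K^+_{\t_{i+1}} - K^+_{\t_i}] + \dbE^\dbP_{\t_i}[K^-_{\t_{i+1}} - K^-_{\t_i}]$ is hopelessly coarse, since it recovers the full variation of $A$ rather than only the Mokobodski-violation positive parts. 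Extracting only the latter requires exploiting the disjoint-support structure of $K^+$ and $K^-$ carefully, perhaps by refining $\pi$ at the hitting times of $Y$ to $L$ and $U$.
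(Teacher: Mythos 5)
Your overall architecture (It\^o/BDG for the $\sup|Y|^2$ and $Z$ terms, plus a separate bound on the finite-variation part fed through Theorem \ref{thm-semimg-est}) is broadly consistent with the paper, but the step you yourself identify as the crux has a genuine gap, and the mechanism you sketch for it cannot work. For a generic partition point $\t_i$, the only information the sandwich $L\le Y\le U$ gives is
$\dbE^\dbP_{\t_i}[L_{\t_{i+1}}]-U_{\t_i}\le \dbE^\dbP_{\t_i}[Y_{\t_{i+1}}]-Y_{\t_i}\le \dbE^\dbP_{\t_i}[U_{\t_{i+1}}]-L_{\t_i}$,
so $|\dbE^\dbP_{\t_i}[Y_{\t_{i+1}}]-Y_{\t_i}|$ is dominated by $[\dbE^\dbP_{\t_i}(U_{\t_{i+1}})-L_{\t_i}]^+ + [U_{\t_i}-\dbE^\dbP_{\t_i}(L_{\t_{i+1}})]^+$. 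These are the positive parts with the \emph{opposite} signs to those appearing in $\|(L,U)\|_\dbP$ in \reff{norm-LU}: since $L\le U$ they are generically of the order of the gap $U-L$ and are not small, whereas the norm only records the Mokobodski violations $[\dbE^\dbP_{\t_i}(L_{\t_{i+1}})-U_{\t_i}]^+$ and $[L_{\t_i}-\dbE^\dbP_{\t_i}(U_{\t_{i+1}})]^+$. No ``case analysis plus Young's inequality'' on a generic partition can flip these signs, because the inequalities $L_{\t_i}\le Y_{\t_i}\le U_{\t_i}$ carry no information about \emph{which} barrier $Y$ is touching. The paper's actual device is to introduce the alternating hitting times $\t_{2i+1}:=\inf\{t\ge\t_{2i}: K^+_t>K^+_{\t_{2i}}\}\wedge T$, $\t_{2i+2}:=\inf\{t\ge\t_{2i+1}: K^-_t>K^-_{\t_{2i+1}}\}\wedge T$, on which the Skorokhod conditions give the \emph{equalities} $Y_{\t_{2i}}=U_{\t_{2i}}$ and $Y_{\t_{2i+1}}=L_{\t_{2i+1}}$ (up to the terminal event); only then does $0\le\dbE^\dbP_{\t_{2i}}[K^-_{\t_{2i+1}}]-K^-_{\t_{2i}}=\dbE^\dbP_{\t_{2i}}[Y_{\t_{2i+1}}]-Y_{\t_{2i}}\le[\dbE^\dbP_{\t_{2i}}(L_{\t_{2i+1}})-U_{\t_{2i}}]^+$ produce the correct norm term. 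The separation condition \reff{separate} is what guarantees these alternating times reach $T$ after finitely many steps, which your proposal never uses. Note also that the logical order is reversed relative to what is feasible: one first bounds $\dbE^\dbP[(\bigvee_0^T A)^2]$ using this one special partition (splitting each increment $K^-_{\t_{2i+1}}-K^-_{\t_{2i}}$ into its predictable projection, controlled as above, and a martingale residual controlled by a discrete energy argument as in Lemma \ref{lem-super2}); the bound over \emph{all} partitions needed for $\|Y\|_\dbP$ then follows for free as in part (i) of Theorem \ref{thm-semimg-est}.

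Two further omissions: the lemma is stated for \emph{local} solutions, so after establishing the estimate under an a priori square-integrability assumption you must localize (the paper does this with the stopping times \reff{hattau} together with a one-barrier reflected BSDE estimate on the stub interval $[\hat\t_n,\tilde\t_n]$); and $Y$ need not be continuous, so the jump parts of $K^\pm$ must be handled separately, using $\D K^+_t\le[\D L_t]^-$ and $\D K^-_t\le[\D U_t]^+$ and the argument for \reff{Ad}. Neither issue is fatal, but both require argument beyond what you have written.
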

\begin{proof}  Without loss of generality, we assume $\|(L, U)\|_{\dbP} < \infty$.  We proceed in three steps.

\ms
{\it Step 1.} We first assume $(Y, Z, A)$ is a solution of \reff{2RBSDE} and $Y$ is continuous. Then   $K^+$ and $K^-$ are also continuous. Apply It\^{o}'s formula on $|Y_t|^2$, by the minimum condition in \reff{2RBSDE} we have,
\bea
\label{RY2}
d |Y_t|^2 &=& 2Y_t Z_t dB_t + |Z_t|^2 dt -2Y_{t-} dK^+_t + 2 Y_{t-} dK^-_t \nonumber\\
&=& 2Y_t Z_t dB_t + |Z_t|^2 dt  - 2L_{t-} d K^+_t + 2 U_{t-} dK^-_t .
\eea
Then, for any $\e>0$,
\beaa
&&\dbE^\dbP\Big[|Y_t|^2 + \int_t^T |Z_s|^2 ds\Big] = \dbE^\dbP\Big[|\xi|^2 + 2\int_t^T L_{s-} dK^+_s - 2\int_t^T U_{s-} dK^-_s\Big] \\
&\le& \dbE^\dbP\Big[|\xi|^2 + 2\sup_{0\le s\le T} L^+_s K^+_T + 2\sup_{0\le  s\le T} U^-_s K^-_T \Big] \\
&\le& \dbE^\dbP\Big[|\xi|^2 + C\e^{-1} \sup_{0\le s\le T} [|L^+_s|^2 + |U^-_s|^2] + \e[ |K^+_T|^2 +|K^-_T|^2] \Big]\\
&\le& \dbE^\dbP[\xi|^2] + C\e^{-1} \|(L, U)\|_{\dbP,0}^2  + \e   \dbE^\dbP\Big[ \big(\bigvee_0^T A\big)^2 \Big].
\eeaa
Following standard arguments, in particular by applying the Burkholder-Davis-Gundy Inequality on \reff{RY2}, we have
\bea
\label{EY2}
\dbE^\dbP\Big[\sup_{0\le t\le T}|Y_t|^2 + \int_0^T |Z_t|^2 dt\Big] \le  C\dbE^\dbP[\xi|^2] + C\e^{-1} \|(L, U)\|_{\dbP,0}^2 + C\e   \dbE^\dbP\Big[ \big(\bigvee_0^T A\big)^2\Big].
\eea
We claim that
\bea
\label{EK2}
\dbE^\dbP\Big[\big(\bigvee_0^T A\big)^2 \Big] \le C\dbE^\dbP\Big[\sup_{0\le t\le T}|Y_t|^2 + \int_0^T |Z_t|^2 dt\Big] + C\|(L, U)\|_{\dbP}^2.
\eea
Combine \reff{EY2} and \reff{EK2} and set $\e$ small, we prove \reff{2RBSDE-est1} immediately.

To prove \reff{EK2}, we define a sequence of stopping times:  $\t_0:=0$ and, for $i\ge 0$,
\bea
\label{taui}
\left.\ba{lll}
 \t_{2i+1} &:=& \inf \{ t \ge \t_{2i}:  K^+_t > K^+_{\t_{2i}} \} \wedge T,  \\
  \t_{2i+2} &:=& \inf \{ t \ge \t_{2i+1}: K^-_t > K^-_{\t_{2i+1}} \} \wedge T.
 \ea\right.
 \eea 
 Then  $d K^+_t =0$ on $[\t_{2i}, \t_{2i+1}]$ and $dK^-_t=0$ on $[\t_{2i+1}, \t_{2i+2}]$, and thus
\bea
\label{Ytaui}
\left.\ba{lll}
\dis  Y_t= Y_{\t_{2i+1}}  - \int_t^{\t_{2i+1}} Z_sdB_s - (K^-_{\t_{2i+1}} - K^-_t),\q t\in [\t_{2i}, \t_{2i+1}];\\
\dis  Y_t= Y_{\t_{2i+2}}  - \int_t^{\t_{2i+2}} Z_sdB_s + (K^+_{\t_{2i+2}} - K^+_t),\q t\in [\t_{2i+1}, \t_{2i+2}];
\ea\right.
\eea
Since $L$ and $U$ are right continuous and $K$ is continuous,  by the minimum condition in \reff{2RBSDE} we have
\bea
\label{Y=LU}
Y_{\t_{2i}} = U_{\t_{2i}}\1_{\{\t_{2i} <T\}} + \xi \1_{\{\t_{2i} = T\}}  \q\mbox{and}\q Y_{\t_{2i+1}} = L_{\t_{2i+1}}\1_{\{\t_{2i+1} <T\}} + \xi \1_{\{\t_{2i+1} = T\}}.
\eea
In particular, on $\{\t_{2i}<T\}$, we have $Y_{\t_{2i}} = U_{\t_{2i}} > L_{\t_{2i}}$, then $Y_t > L_t$ for $t$ in a right neighborhood of $\t_{2i}$ and thus $dK^+_t = 0$. This implies that $\t_{2i+1} > \t_{2i}$ on $\{\t_{2i}<T\}$. Similarly, $\t_{2i+2} > \t_{2i+1}$ on $\{\t_{2i+1}<T\}$. Moreover, as in \cite{HHO}, we see that
\bea
\label{taulimit}
\mbox{for a.s. $\o$, $\t_n(\o) = T$ for $n$ large enough.}
\eea
Indeed, denote $\t^* := \lim_{n\to\infty} \t_n$. If $\t^*<T$, then $\t_n < \t^*$ for all $n$ and we get
\beaa
L_{\t^*-} = \lim_{n\to\infty} L_{\t_{2i+1}} =   \lim_{n\to\infty} Y_{\t_{2i+1}} = Y_{\t^*-} = \lim_{n\to\infty} Y_{\t_{2i}} = \lim_{n\to\infty} U_{\t_{2i}} = U_{\t^*-}.
\eeaa
This contradicts with \reff{separate}.

For each $i$, by \reff{Ytaui} and \reff{Y=LU},
\beaa
0&\le& \dbE^\dbP_{\t_{2i}} [K^-_{\t_{2i+1}}] - K^-_{\t_{2i}} = \dbE^\dbP_{\t_{2i}} [Y_{\t_{2i+1}}] - Y_{\t_{2i}}\\
 &=& \dbE^\dbP_{\t_{2i}} \Big[L_{\t_{2i+1}}\1_{\{\t_{2i+1} <T\}} + \xi \1_{\{\t_{2i+1} = T\}}\Big]-U_{\t_{2i}}\1_{\{\t_{2i} <T\}} - \xi \1_{\{\t_{2i} = T\}}\\
 &=& \Big[\dbE^\dbP_{\t_{2i}} [L_{\t_{2i+1}}] - U_{\t_{2i}}\Big] \1_{\{\t_{2i} <T\}} + \dbE^\dbP_{\t_{2i}} \Big[[L_{\t_{2i+1}}-\xi]\1_{\{\t_{2i}<T=\t_{2i+1}\}} \Big]\\
 &\le& \Big[\dbE^\dbP_{\t_{2i}} [L_{\t_{2i+1}}] - U_{\t_{2i}}\Big]^+
\eeaa
Then for any $n$,
\beaa
\dbE^\dbP\Big[ \Big ( \sum_{i=0}^n \big[\dbE^\dbP_{\t_{2i}} [K^-_{\t_{2i+1}}] - K^-_{\t_{2i}}\big] \Big ) \Big]^2 \le \|(L, U)\|_{\dbP}^2.
\eeaa
Send $n\to\infty$, we get
\bea
\label{DK-2}
\dbE^\dbP\Big[ \Big ( \sum_{i\ge 0} \big[\dbE^\dbP_{\t_{2i}} [K^-_{\t_{2i+1}}] - K^-_{\t_{2i}}\big] \Big ) \Big]^2 \le \|(L, U)\|_{\dbP}^2.
\eea
Similarly,
\bea
\label{DK+2}
\dbE^\dbP\Big[ \Big ( \sum_{i\ge 0} \big[\dbE^\dbP_{\t_{2i+1}} [K^+_{\t_{2i+2}}] - K^+_{\t_{2i+1}}\big] \Big ) \Big]^2 \le \|(L, U)\|_{\dbP}^2.
\eea

Denote
\bea
\label{hatYn}
\hat{Y}_{\t_n} := Y_{\t_n} - \sum_{i\le {n\over 2}}\big[\dbE^\dbP_{\t_{2i}} [K^-_{\t_{2i+1}}] - K^-_{\t_{2i}}\big] + \sum_{i\le {n-1\over 2}} \big[\dbE^\dbP_{\t_{2i+1}} [K^+_{\t_{2i+2}}] - K^+_{\t_{2i+1}}\big] .
\eea
By \reff{DK-2} and \reff{DK+2}, we have
 \bea
 \label{EhatY}
 \dbE^\dbP\Big[\max_{n\ge 0}|\hat Y_{\t_n}|^2\Big] \le  C\dbE^\dbP\Big[\sup_{0\le t\le T}|Y_t|^2\Big] +C \|(L, U)\|_{\dbP}^2.
 \eea
 Note that
\beaa
\hat Y_{\t_n}= Y_0 + \int_0^{\t_n} Z_s dB_s + \sum_{i\le {n\over 2}}\big[K^-_{\t_{2i+1}} - \dbE^\dbP_{\t_{2i}} [K^-_{\t_{2i+1}}] \big] -\sum_{i\le {n-1\over 2}} \big[K^+_{\t_{2i+2}}- \dbE^\dbP_{\t_{2i+1}} [K^+_{\t_{2i+2}}] \big]
\eeaa
is a martingale.  By \reff{EhatY}, we have
\beaa
&&\dbE^\dbP\Big[\sum_{i\le {n\over 2}}\big[K^-_{\t_{2i+1}} - \dbE^\dbP_{\t_{2i}} [K^-_{\t_{2i+1}}] \big]^2 +\sum_{i\le {n-1\over 2}} \big[K^+_{\t_{2i+2}}- \dbE^\dbP_{\t_{2i+1}} [K^+_{\t_{2i+2}}] \big] ^2\Big]\\
&=& \dbE^\dbP\Big[\Big(\sum_{i\le {n\over 2}}\big[K^-_{\t_{2i+1}} - \dbE^\dbP_{\t_{2i}} [K^-_{\t_{2i+1}}] \big] -\sum_{i\le {n-1\over 2}} \big[K^+_{\t_{2i+2}}- \dbE^\dbP_{\t_{2i+1}} [K^+_{\t_{2i+2}}] \big] \Big)^2\Big]\\
&=& \dbE^\dbP\Big[\Big(\hat Y_{\t_n}- Y_0 - \int_0^{\t_n} Z_s dB_s \Big)^2\Big] \le C\dbE^\dbP\Big[\sup_{i\ge 0}|\hat Y_{\t_i}|^2 + \int_0^{\t_n}|Z_t|^2dt\Big]\\
&\le&   C\dbE^\dbP\Big[\sup_{0\le t\le T}|Y_t|^2+\int_0^T |Z_t|^2dt\Big]+C \|(L, U)\|_{\dbP}^2.
\eeaa
Send $n\to\infty$ and
\bea
\label{DK2}
&&\dbE^\dbP\Big[\sum_{i\ge 0}\big[K^-_{\t_{2i+1}} - \dbE^\dbP_{\t_{2i}} [K^-_{\t_{2i+1}}] \big]^2 +\sum_{i\ge 0} \big[K^+_{\t_{2i+2}}- \dbE^\dbP_{\t_{2i+1}} [K^+_{\t_{2i+2}}] \big] ^2\Big]\nonumber\\
&=& \dbE^\dbP\Big[\Big(\sum_{i\ge 0}\big[K^-_{\t_{2i+1}} - \dbE^\dbP_{\t_{2i}} [K^-_{\t_{2i+1}}] \big] -\sum_{i\ge 0} \big[K^+_{\t_{2i+2}}- \dbE^\dbP_{\t_{2i+1}} [K^+_{\t_{2i+2}}] \big] \Big)^2\Big]\nonumber\\
&\le& C\dbE^\dbP\Big[\sup_{0\le t\le T}|Y_t|^2+\int_0^T |Z_t|^2dt\Big]+C \|(L, U)\|_{\dbP}^2.
\eea
This, together with \reff{taulimit}, \reff{DK-2} and \reff{DK+2}, implies further that
\beaa
&&\dbE^\dbP\Big[|K^+_T|^2 + |K^-_T|^2\Big] = \dbE^\dbP\Big[ \Big(\sum_{i\ge 0}\big[K^-_{\t_{2i+1}} - K^-_{\t_{2i}}\big]\Big)^2 +\Big(\sum_{i\ge 0} \big[K^+_{\t_{2i+2}}-K^+_{\t_{2i+1}}\big]\Big) ^2\Big]\nonumber\\
&\le& C\dbE^\dbP\Big[\Big(\sum_{i\ge 0}\big[K^-_{\t_{2i+1}} - \dbE^\dbP_{\t_{2i}} [K^-_{\t_{2i+1}}] \big]\Big)^2 +\Big(\sum_{i\ge 0}\big[\dbE^\dbP_{\t_{2i}} [K^-_{\t_{2i+1}}- K^-_{\t_{2i}}] \big]\Big)^2\Big]\nonumber\\
&&+ C\dbE^\dbP\Big[\Big(\sum_{i\ge 0} \big[K^+_{\t_{2i+2}}- \dbE^\dbP_{\t_{2i+1}} [K^+_{\t_{2i+2}}] \big]\Big)^2 +\Big(\sum_{i\ge 0} \big[\dbE^\dbP_{\t_{2i+1}} [K^+_{\t_{2i+2}}]- K^+_{\t_{2i+1}} \big]\Big)^2  \Big]\nonumber\\
&=&C\dbE^\dbP\Big[\sum_{i\ge 0}\big[K^-_{\t_{2i+1}} - \dbE^\dbP_{\t_{2i}} [K^-_{\t_{2i+1}}] \big]^2 + \sum_{i\ge 0} \big[K^+_{\t_{2i+2}}- \dbE^\dbP_{\t_{2i+1}} [K^+_{\t_{2i+2}}] \big]^2\Big]\nonumber\\
&&+ C\dbE^\dbP\Big[\Big(\sum_{i\ge 0}\big[\dbE^\dbP_{\t_{2i}} [K^-_{\t_{2i+1}}- K^-_{\t_{2i}}] \big]\Big)^2 + \Big(\sum_{i\ge 0} \big[\dbE^\dbP_{\t_{2i+1}} [K^+_{\t_{2i+2}}]- K^+_{\t_{2i+1}} \big]\Big)^2 \Big]\nonumber\\
&\le& C\dbE^\dbP\Big[\sup_{0\le t\le T}|Y_t|^2+\int_0^T |Z_t|^2dt\Big] +C \|(L, U)\|_{\dbP}^2.
\eeaa
This proves \reff{EK2} and hence \reff{2RBSDE-est1}.

\ms

{\it Step 2.} We next assume $(Y, Z, A)$ is a local solution but $Y$ is still continuous.  Let $\t_i$ be defined by \reff{taui}. Then \reff{Ytaui}-\reff{taulimit} still hold. This implies
\beaa
Y_{\t_{2i}} &\ge&  -U^-_{\t_{2i}}\1_{\{\t_{2i} <T\}} - |\xi| \1_{\{\t_{2i} = T\}}\ge -\Big[\sup_{0\le t\le T} U_t^- + |\xi|\Big],\\
Y_{\t_{2i}} &\le& \dbE^\dbP_{\t_{2i}}[Y_{\t_{2i+1}}] \le \dbE^\dbP_{\t_{2i}}\Big[L^+_{\t_{2i+1}}\1_{\{\t_{2i+1} <T\}} + |\xi| \1_{\{\t_{2i+1} = T\}}\Big] \le \dbE^\dbP_{\t_{2i}}\Big[\sup_{0\le t\le T} L_t^+ + |\xi|\Big].
\eeaa
Then
\beaa
\max_{i\ge 0} |Y_{\t_{2i}}| &\le& \Big[\sup_{0\le t\le T} U_t^- + |\xi|\Big] \bigvee  \sup_{0\le s\le T}\dbE^\dbP_s\Big[\sup_{0\le t\le T} L_t^+ + |\xi|\Big] \\
&\le& \sup_{0\le s\le T} \dbE^\dbP_s\Big[\sup_{0\le t\le T} [L_t^+ +U_t^-]+ |\xi|\Big].
\eeaa
Thus
\bea
\label{maxYtau}
&&\dbE^\dbP\Big[\max_{i\ge 0} |Y_{\t_{2i}}|^2\Big] \le \dbE^\dbP\Big[\Big(\sup_{0\le s\le T} \dbE^\dbP_s\big[\sup_{0\le t\le T} [L_t^+ +U_t^-]+ |\xi|\big]\Big)^2\Big]\nonumber\\
&\le& C\dbE^\dbP\Big[\Big(\sup_{0\le t\le T} [L_t^+ +U_t^-]+ |\xi|\Big)^2\Big]\le C\dbE^\dbP[|\xi|^2] + C\|(L, U)\|_{\dbP,0}^2.
\eea

Now for any $n$, define
\bea
\label{hattau}
\hat\t_n :=\inf\Big\{ t: \sup_{0\le s \le t} |Y_s| + \int_0^t |Z_s|^2 ds + \bigvee_0^t A \ge n\Big\}\wedge T.
\eea
Then
\bea
\label{Yhattau}
\dbE^\dbP\Big[\sup_{0\le t < \hat\t_n} |Y_t|^2 + \int_0^{\hat\t_n} |Z_t|^2 dt + \Big(\bigvee_0^{\hat\t_n} A\Big)^2 \Big\} <\infty.
\eea
Define
\beaa
\tilde\t_n := \inf\{ \t_{2i}: \t_{2i}\ge \hat{\t}_n\}.
\eeaa
Then by \reff{2RBSDE} and \reff{maxYtau} we have
\beaa
&&Y_t = Y_{\tilde\t_n} + \int_t^{\tilde\t_n} Z_s dB_s  - (K^-_{\tilde\t_n} - K^-_t),~Y_t \le U_t, ~ [U_t - Y_t] dK^-_t = 0,~  t\in [\hat\t_n, \tilde\t_n];\\
&& \dbE^\dbP[|Y_{\tilde\t_n}|^2] \le C\dbE^\dbP[|\xi|^2] + C\|(L, U)\|_{\dbP,0}^2.
\eeaa
By standard arguments for Reflected BSDEs with one barrier, see e.g. \cite{EKPPQ}, 
\beaa
 \dbE^\dbP[|Y_{\hat\t_n}|^2] \le C\dbE^\dbP[|\xi|^2] + C\|(L, U)\|_{\dbP,0}^2 + C\dbE^\dbP\Big[\sup_{0\le t\le T}|U^-_t|^2\Big] \le C\dbE^\dbP[|\xi|^2] + C\|(L, U)\|_{\dbP,0}^2.
 \eeaa
 This, together with \reff{Yhattau}, implies that
 \beaa
 \dbE^\dbP\Big[\sup_{0\le t \le \hat\t_n} |Y_t|^2 + \int_0^{\hat\t_n} |Z_t|^2 dt + \Big(\bigvee_0^{\hat \t_n} A\Big)^2 \Big\} <\infty.
\eeaa
Then by Step 1, we obtain
\beaa
 \dbE^\dbP\Big[\sup_{0\le t \le \hat\t_n} |Y_t|^2 + \int_0^{\hat\t_n} |Z_t|^2 dt + \Big(\bigvee_0^{\hat \t_n} A\Big)^2 \Big\} &\le& C\dbE^\dbP[|Y_{\hat\t_n}|^2] + C\|(L, U)\|_{\dbP}^2\\
 &\le& C\dbE^\dbP[|\xi|^2] + C\|(L, U)\|_{\dbP}^2.
\eeaa
Note that $\hat \t_n = T$ when $n$ is large enough. Send $n\to\infty$ and apply the Monotone Convergence Theorem, we prove  \reff{2RBSDE-est1}.

\ms
{\it Step 3.}  Finally we allow $Y$ to be discontinuous.. Let
\beaa
&\dis\bar Y_t := Y_t - \sum_{0< s\le t} \D Y_s,\q \bar K^+_t := K^+_t - \sum_{0< s \le t} \D K^+_s,\q \bar K^-_t := K^-_t - \sum_{0< s \le t} \D K^-_s,&\\
&\dis\bar A_t := \bar K^+_t - \bar K^-_t,~ \bar L_t := L_t - \sum_{0< s \le t} \D K^+_s,~ \bar U_t := U_t + \sum_{0< s \le t} \D K^-_s,~ \bar\xi := \xi - \sum_{0< s\le T} \D Y_s.&
\eeaa
Then it is clear that $\bar Y$ is continuous,  $(\bar L, \bar U)$ satisfies \reff{separate}, and $(\bar Y, Z, \bar A)$ is a local solution to DRBSDE \reff{2RBSDE} with coefficients $(\bar\xi, 0, \bar L, \bar U)$.  By Step 2, we have
\beaa
\|(\bar Y, Z, \bar A)\|^2 \le C \dbE^\dbP[|\bar\xi|^2] + C \|(\bar L, \bar U)\|_{\dbP}^2.
\eeaa
One can check straightforwardly that
\beaa
&&\|(Y, Z,  A)\|^2  \le C \|(\bar Y, Z, \bar A)\|^2 + C \dbE^\dbP\Big[ \Big( \sum_{0\le t\le T} [\D K^+_t + \D K^-_t]\Big)^2\Big] ;\\
 && \dbE^\dbP[|\bar\xi |^2] \le C\dbE^\dbP[|\xi|^2] +C \dbE^\dbP\Big[ \Big( \sum_{0\le t\le T} [\D K^+_t + \D K^-_t]\Big)^2\Big];\\
 && \|(\bar L, \bar U)\|_{\dbP}^2 \le  \|(L, U)\|_{\dbP}^2 
 \eeaa
 Then
 \bea
\label{barYest}
\|(Y, Z, A)\|^2 \le C \dbE^\dbP[|\xi|^2] + C \|(L, U)\|_{\dbP}^2 + C \dbE^\dbP\Big[ \Big( \sum_{0\le t\le T} [\D K^+_t + \D K^-_t]\Big)^2\Big].
\eea

Note that, when $\D K^+_t >0$, by the minimum condition of \reff{2RBSDE} we see that $Y_{t-} = L_{t-}$. Since $K^+$ and $K^-$ are orthogonal, we have $\D Y_t = -\D K^+_t$. Thus $L_t \le Y_t = Y_{t-} - \D K^+_t  = L_{t-} - \D K^+_t$. This implies that  $\sum_{0< t \le T} \D K^+_t \le \sum_{0< t\le T} [\D L_t]^-$. Similarly we have $\sum_{0< t \le T} \D K^-_t \le \sum_{0< t\le T} [\D U_t]^+$. Following the arguments for \reff{Ad}, one can easily prove that
\beaa
\dbE^\dbP\Big[ \Big( \sum_{0< t\le T} [[\D L_t]^-  + [\D U_t]^+]\Big)^2\Big] \le C\|(L, U)\|_{\dbP}^2.
\eeaa
Then \reff{2RBSDE-est1} follows from \reff{barYest} immediately.
\end{proof}

\no{\bf Proofs of Theorem \ref{thm-DRBSDE}.} First, by Lemma \ref{lem-PX} we know (ii) implies (i).  On the other hand, if (i) holds true, then $Y^0:= Y$ is clearly a square integrable semimartingale between $L$ and $U$.    That is, (i) and (ii) are equivalent.

Next, assume  (ii) holds true.  Since $L\le Y^0\le U$, then for any partition $\pi: 0=\t_0<\cds<\t_n=T$,
\beaa
&& L^+ + U^- \le (Y^0)^+ + (Y^0)^- = |Y^0|;\\
&&\Big[\dbE^\dbP_{\t_i}[L_{\t_{i+1}}] - U_{\t_i}\Big]^+ + \Big[L_{\t_i} - \dbE^\dbP_{\t_i}[U_{\t_{i+1}}] \Big]^+ \\
&\le& \Big[ \dbE^\dbP_{\t_i}[Y^0_{\t_{i+1}}] - Y^0_{\t_i}\Big]^+ +\Big[  Y^0_{\t_i} -\dbE^\dbP_{\t_i}[Y^0_{\t_{i+1}}]\Big]^+ =\Big| \dbE^\dbP_{\t_i}[Y^0_{\t_{i+1}}] - Y^0_{\t_i}\Big|.
\eeaa
This implies immediately that $\|(L, U)\|_{\dbP} \le \|Y^0\|_{\dbP}$, and thus (iii) holds.

It remains to prove that (iii) implies (ii).  We first assume \reff{separate} holds. Then it follows from Lemma \ref{lem-HHO} that DRBSDE \reff{2RBSDE} with $f=0$ admits a local solution $(Y^0, Z^0, A^0)$.  Applying Lemma \ref{lem-apriori} we see that $\|(Y^0, Z^0, A^0)\| \le C[I_0 + \|(L,U)\|_\dbP]$. This implies \reff{Mokobodski}. 

In the general case, denote $U^n := U + \frac{1}{n}$. 
Then $(L, U^n)$ satisfies \reff{separate}. By the above arguments,   DRBSDE \reff{2RBSDE} with coefficients $(\xi, 0, L, U^n)$ has a unique solution $(Y^n, Z^n, A^n)$ satisfying
\beaa
\|(Y^n, Z^n, A^n)\|^2 \le C\dbE^\dbP[|\xi|^2] + C \|(L,U^n)\|_\dbP^2
\eeaa
It is obvious that $\|(L, U^n)\|_\dbP\le \|(L,U)\|_\dbP$. Then
\beaa
\|(Y^n, Z^n, A^n)\|^2 \le C\dbE^\dbP[|\xi|^2] + C \|(L, U)\|_{\dbP}^2.
\eeaa
Now for $m>n$, applying Lemma \ref{lem-difference} we have
\beaa
&&\dbE^\dbP\Big[\sup_{0\le t\le T}[|Y^n_t-Y^m_t|^2 + [A^n_t-A^m_t]^2+ \int_0^T |Z^n_t-Z^m|^2dt\Big]\\
&\le& C\Big[\|(Y^n, Z^n, A^n)\|+\|(Y^m, Z^m, A^m)\|\Big][{1\over n} - {1\over m}] \\
&\le& {C\over n}\Big[\Big(\dbE^\dbP[|\xi|^2]\Big)^{1\over 2} +  \|(L,U)\|_\dbP\Big]. 
\eeaa
Send $n\to\infty$, we obtain limit processes $(Y^0, Z^0, A^0)$. Following standard arguments we see that $Y^0$ satisfies the requirement in (ii). 
\qed


\section{Semimartingales under $G$-expectation}
\label{sect-G}
\setcounter{equation}{0}
In this section we introduce a nonlinear expectation, which is a variation of the $G$-expectation proposed by Peng \cite{Peng-G}, and we shall still call it $G$-expectation. Let $(\O, \cF,\dbF)$ be a filtered space such that $\dbF$ is right continuous and $\cP$ be a family of probability measures. For each $\dbP\in\cP$ and $\dbF$-stopping time $\t$, denote
\bea
\label{cPtP}
\cP(\t, \dbP) := \big\{\dbP'\in\cP:  \dbP' = \dbP ~~\mbox{on}~~\cF_\t\big\}.
\eea
Throughout this section, we shall always assume

\begin{assum}
\label{assum-cP}
(i) \reff{Filtration} holds for every $\dbP\in \cP$;

(ii) $\cN_\cP\subset \cF_0$, where $\cN_\cP$ is the set of all $\cP$-polar sets, that is, all $E\in\cF$ such that $\dbP(E)=0$ for all $\dbP\in\cP$.

(iii) For any $\dbP\in\cP$, $\dbF$-stopping time $\t$, $\dbP_1, \dbP_2\in\cP(\t,\dbP)$, and any partition $E_1, E_2 \in \cF_\t$ of $\O$ , the probability measure $\bar\dbP$ defined below also belongs to $\cP(\t,\dbP)$:
\bea
\label{barP}
\bar\dbP(E) := \dbP_1(E\cap E_1) + \dbP_2(E\cap E_2), &\mbox{for all}& E\in \cF.
\eea
\end{assum}
We provide below an important example for such $\cP$, which induces the $G$-expectation of Peng \cite{Peng-G}, and we refer to \cite{STZ-aggregation} for more examples. 
\begin{eg}
\label{eg-G}
Let $\O := \{\o\in C([0, T], \dbR): \o_0=0\}$, $B$ the canonical process, $\dbF$ the right limit of the filtration generated by $B$, $\dbP_0$ the Winer measure. Let $ 0\le \underline\si < \overline\si$ be two constants. For each bounded $\dbF$-progressively measurable process $\si$, denote $X^\a_t := \int_0^t \a_s dB_s$, $\dbP_0$-a.s. Then the following class $\cP$ satisfies Assumption \ref{assum-cP}:
\beaa
\cP := \{\dbP^\si: \underline\si \le \si\le \overline\si\} &\mbox{where}& \dbP^\si := \dbP_0\circ (X^\a)^{-1}.
\eeaa
\end{eg}
\subsection{Definitions}
We first define
\begin{defn}
\label{defn-cPmg}
We say an $\dbF$-progressively measurable process $Y$ is a $\cP$-martingale (resp. $\cP$-supermartingale, $\cP$-submartingale, $\cP$-semimartingale) if it is a $\dbP$-martingale (resp. $\dbP$-supermartingale, $\dbP$-submartingale, $\dbP$-semimartingale) for all $\dbP\in\cP$.
\end{defn}

We next define the $G$-expectation and conditional $G$-expectation.  For any $\cF$-measurable random variable $\xi$ such that $\dbE^\dbP[|\xi|]<\infty$ for all $\dbP\in\cP$,  its $G$-expectation is defined by
\bea
\label{EG}
 \dbE^G[\xi] := \sup_{\dbP\in\cP} \dbE^\dbP[\xi].
 \eea
 The conditional $G$-expectation is more involved.  For any $\dbF$-stopping time $\t$, denote
\bea
\label{EGPt}
\dbE^{G,\dbP}_\t[\xi] := \esup_{\dbP'\in \cP(\t,\dbP)}^{\dbP} \dbE^{\dbP'}_\t[\xi],~~\dbP\mbox{-a.s.} 
\eea
We note that, by Lemma \ref{lem-filtration}, we may take the convention that $\dbE^{G,\dbP}_\t[\xi]$ is $\cF_\t$-measurable. 
When the family $\{\dbE^{G,\dbP}_\t[\xi], \dbP\in\cP\}$ can be aggregated, that is, there exists an $\cF_\t$-measurable random variable, denoted as $\dbE^{G}_\t[\xi]$, such that
\bea
\label{EGt}
\dbE^{G}_\t[\xi] = \dbE^{G,\dbP}_\t[\xi],~~\dbP\mbox{-a.s. for all}~~ \dbP\in\cP,
\eea
we call $\dbE^{G}_\t[\xi]$ the conditional $G$-expectation of $\xi$.  We refer to Soner, Touzi and Zhang  \cite{STZ-aggregation} for the detailed study on the  aggregation issue.  Following standard arguments, we have the following time consistency (or say, Dynamic Programming Principle), whose proof is provided in the Appendix for completeness:

\begin{lem}
\label{lem-DPP}
Under Assumption \ref{assum-cP}, for any $\t_1 \le \t_2$ and any $\dbP\in\cP$, we have
\beaa
\dbE^{G,\dbP}_{\t_1}[\xi] = \esup_{\dbP'\in \cP(\t_1,\dbP)}^{\dbP} \dbE^{\dbP'}_{\t_1}\big[\dbE^{G,\dbP'}_{\t_2}[\xi]\big],~~\dbP-\mbox{a.s.}
\eeaa
\end{lem}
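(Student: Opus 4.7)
\textbf{Proof plan for Lemma \ref{lem-DPP}.} I would prove the two inequalities separately. Let me write $J(\dbP') := \dbE^{\dbP'}_{\t_1}[\dbE^{G,\dbP'}_{\t_2}[\xi]]$ for the quantity appearing on the right of the claim, so the goal is to show that $\esup_{\dbP'\in\cP(\t_1,\dbP)}^{\dbP} \dbE^{\dbP'}_{\t_1}[\xi] = \esup_{\dbP'\in\cP(\t_1,\dbP)}^{\dbP} J(\dbP')$, $\dbP$-a.s.

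For the easy direction ($\le$), fix $\dbP'\in\cP(\t_1,\dbP)$ and apply the ordinary tower property under $\dbP'$ to write $\dbE^{\dbP'}_{\t_1}[\xi] = \dbE^{\dbP'}_{\t_1}\big[\dbE^{\dbP'}_{\t_2}[\xi]\big]$. Since trivially $\dbP'\in\cP(\t_2,\dbP')$, we get $\dbE^{\dbP'}_{\t_2}[\xi] \le \dbE^{G,\dbP'}_{\t_2}[\xi]$, $\dbP'$-a.s.; monotonicity of $\dbE^{\dbP'}_{\t_1}[\cdot]$ then yields $\dbE^{\dbP'}_{\t_1}[\xi] \le J(\dbP')$, $\dbP'$-a.s. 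Because both sides are $\cF_{\t_1}$-measurable and $\dbP=\dbP'$ on $\cF_{\t_1}$, the inequality also holds $\dbP$-a.s., and taking $\dbP$-essential supremum over $\dbP'\in\cP(\t_1,\dbP)$ completes this direction.

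The reverse direction ($\ge$) is the main step and is where the hypotheses on $\cP$ really come into play. For fixed $\dbP'\in\cP(\t_1,\dbP)$ and any $\dbP''\in\cP(\t_2,\dbP')$, I first observe that $\dbP''\in\cP(\t_1,\dbP)$ (since $\dbP''=\dbP'$ on $\cF_{\t_2}\supset\cF_{\t_1}$ and $\dbP'=\dbP$ on $\cF_{\t_1}$), so $\dbE^{\dbP''}_{\t_1}[\xi]\le \dbE^{G,\dbP}_{\t_1}[\xi]$, $\dbP$-a.s. Using that $\dbP'$ and $\dbP''$ coincide on $\cF_{\t_2}$ and that $\dbE^{\dbP''}_{\t_2}[\xi]$ is $\cF_{\t_2}$-measurable, the tower property gives
\[
\dbE^{\dbP'}_{\t_1}\big[\dbE^{\dbP''}_{\t_2}[\xi]\big] = \dbE^{\dbP''}_{\t_1}\big[\dbE^{\dbP''}_{\t_2}[\xi]\big] = \dbE^{\dbP''}_{\t_1}[\xi] \le \dbE^{G,\dbP}_{\t_1}[\xi],\quad \dbP\text{-a.s.}
\]
The critical remaining issue is to promote the inner $\dbE^{\dbP''}_{\t_2}[\xi]$ to the essential supremum $\dbE^{G,\dbP'}_{\t_2}[\xi]$. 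For this I would establish the upward-lattice property of $\{\dbE^{\dbP''}_{\t_2}[\xi]:\dbP''\in\cP(\t_2,\dbP')\}$: given two such $\dbP''_1,\dbP''_2$, set $E:=\{\dbE^{\dbP''_1}_{\t_2}[\xi]\ge \dbE^{\dbP''_2}_{\t_2}[\xi]\}\in\cF_{\t_2}$ and paste them via Assumption \ref{assum-cP}(iii) to obtain $\bar\dbP\in\cP(\t_2,\dbP')$ with $\dbE^{\bar\dbP}_{\t_2}[\xi] = \dbE^{\dbP''_1}_{\t_2}[\xi]\vee\dbE^{\dbP''_2}_{\t_2}[\xi]$, $\dbP'$-a.s. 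A standard argument then produces a sequence $\{\dbP''_n\}\subset \cP(\t_2,\dbP')$ with $\dbE^{\dbP''_n}_{\t_2}[\xi]\ua \dbE^{G,\dbP'}_{\t_2}[\xi]$, $\dbP'$-a.s., and monotone convergence under $\dbE^{\dbP'}_{\t_1}[\cdot]$ gives $J(\dbP') = \lim_n \dbE^{\dbP'}_{\t_1}[\dbE^{\dbP''_n}_{\t_2}[\xi]] \le \dbE^{G,\dbP}_{\t_1}[\xi]$, $\dbP$-a.s. Taking essential supremum over $\dbP'$ yields the desired inequality.

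The main obstacle is the lattice/pasting argument: one has to be careful that $\bar\dbP$ really lies in $\cP(\t_2,\dbP')$ and that $\dbP$-a.s., $\dbP'$-a.s., and $\cP$-q.s. statements are consistently handled across the three measures involved. Assumption \ref{assum-cP}(iii) is tailored precisely for this, and Assumption \ref{assum-cP}(ii) (together with Lemma \ref{lem-filtration}) ensures that the $\cF_\t$-measurable versions of the conditional $G$-expectations can be compared unambiguously. Integrability of $\xi$ under every $\dbP\in\cP$ is used implicitly so that MCT and the standard essential-supremum selection apply.
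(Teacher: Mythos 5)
Your proposal is correct and follows essentially the same route as the paper: the easy inequality via the tower property and $\dbE^{\dbP'}_{\t_2}[\xi]\le\dbE^{G,\dbP'}_{\t_2}[\xi]$, and the reverse inequality via the pasting construction of Assumption \ref{assum-cP}(iii) to get the upward-lattice property, a Neveu-type increasing sequence attaining the essential supremum, monotone convergence, and the observation that $\cP(\t_2,\dbP')\subset\cP(\t_1,\dbP)$. The paper's proof is exactly this argument, with the lattice/pasting step spelled out as its equation \reff{tildePn}.
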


We finally define

\begin{defn}
\label{defn-Gmg}
We say an $\dbF$-progressively measurable process $Y$ is a $G$-martingale (resp. $G$-supermartingale, $G$-submartingale) if, for  any $\dbP\in\cP$ and  any $\dbF$-stopping times $\t_1 \le \t_2$,
  \beaa
  Y_{\t_1} = (\mbox{resp.} \ge, \le) \dbE^{G,\dbP}_{\t_1}[Y_{\t_2}],~~\dbP\mbox{-a.s.}
  \eeaa
\end{defn}
We remark that a $\cP$-martingale is also called a symmetric $G$-martingale in the literature, see e.g. \cite{XZ}.

\subsection{Characterization of $\cP$-semimartingales}
The following result is immediate:

\begin{prop}
\label{prop-cPG}
Let Assumption \ref{assum-cP} hold.

(i) A $\cP$-martingale (resp. $\cP$-supermartingale, $\cP$-submartingale) must be a $G$-martingale (resp. $G$-supermartingale, $G$-submartingale).

(ii) If $Y$ is a $G$-martingale (resp. $G$-supermartingale, $G$-submartingale) and $M$ is a $\cP$-martingale, then $Y+M$ is a $G$-martingale (resp. $G$-supermartingale, $G$-submartingale).

(iii) A $G$-supermartingale is a $\cP$-supermartinagle. In particular, a $G$-martingale is a $\cP$-supermartinagle.
\end{prop}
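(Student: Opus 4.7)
All three statements should follow almost directly from the definitions once $\dbE^{G,\dbP}_\t$ is unpacked via \reff{EGPt} and the constraint built into $\cP(\t,\dbP)$ that $\dbP' = \dbP$ on $\cF_\t$ is exploited. My plan is to dispatch each item separately, handling only the supermartingale direction in (i) and (ii) (the martingale case is the conjunction of both one-sided versions, and the submartingale case is symmetric), and relying throughout on the fact that $\cF_{\t_1}$-measurable identities hold $\dbP$-a.s.\ iff they hold $\dbP'$-a.s.\ whenever $\dbP' \in \cP(\t_1,\dbP)$.

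For (i), fix $\dbP\in\cP$ and stopping times $\t_1\le\t_2$, and take an arbitrary $\dbP'\in\cP(\t_1,\dbP)$. The $\cP$-supermartingale hypothesis gives $\dbE^{\dbP'}_{\t_1}[Y_{\t_2}] \le Y_{\t_1}$, $\dbP'$-a.s., hence also $\dbP$-a.s.\ by the agreement on $\cF_{\t_1}$. Taking the essential $\dbP$-supremum over $\dbP'\in\cP(\t_1,\dbP)$ preserves the inequality and yields $\dbE^{G,\dbP}_{\t_1}[Y_{\t_2}]\le Y_{\t_1}$, $\dbP$-a.s., which is the $G$-supermartingale property. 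For (ii), under the same setup and any $\dbP'\in\cP(\t_1,\dbP)$, since $M$ is in particular a $\dbP'$-martingale we have the $\cF_{\t_1}$-measurable pullout $\dbE^{\dbP'}_{\t_1}[Y_{\t_2}+M_{\t_2}] = \dbE^{\dbP'}_{\t_1}[Y_{\t_2}] + M_{\t_1}$. Because $M_{\t_1}$ does not depend on $\dbP'$, taking the essential supremum over $\dbP'\in\cP(\t_1,\dbP)$ commutes with the additive constant, so $\dbE^{G,\dbP}_{\t_1}[Y_{\t_2}+M_{\t_2}] = \dbE^{G,\dbP}_{\t_1}[Y_{\t_2}] + M_{\t_1}$, and the $G$-(super/sub)martingale property of $Y$ transfers to $Y+M$.

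For (iii), the key observation is the trivial inclusion $\dbP\in\cP(\t_1,\dbP)$, so $\dbE^{\dbP}_{\t_1}[Y_{\t_2}]$ is one of the candidates in the essential supremum defining $\dbE^{G,\dbP}_{\t_1}[Y_{\t_2}]$. Combining this with the $G$-supermartingale inequality yields $\dbE^{\dbP}_{\t_1}[Y_{\t_2}] \le \dbE^{G,\dbP}_{\t_1}[Y_{\t_2}] \le Y_{\t_1}$, $\dbP$-a.s., so $Y$ is a $\dbP$-supermartingale, and varying $\dbP\in\cP$ delivers the $\cP$-supermartingale property; a $G$-martingale is in particular a $G$-supermartingale, so the second assertion is immediate. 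I do not expect a serious obstacle here: the only slightly delicate point is the pullout step in (ii), where I must be careful that $M_{\t_1}$ is $\cF_{\t_1}$-measurable and finite $\dbP$-a.s.\ so that the additive factorization is valid inside $\esup^{\dbP}$, but this follows from $M$ being a $\cP$-martingale together with Lemma \ref{lem-filtration} and Assumption \ref{assum-cP}(ii).
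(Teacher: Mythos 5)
Your proof is correct and follows essentially the same route as the paper: the paper dismisses (i) and (ii) as obvious (your unpacking of the essential supremum and the pullout of $M_{\tau_1}$ is exactly the intended justification), and for (iii) the paper uses precisely your observation that $\dbP\in\cP(\tau_1,\dbP)$ gives $\dbE^{\dbP}_{\tau_1}[Y_{\tau_2}]\le \dbE^{G,\dbP}_{\tau_1}[Y_{\tau_2}]\le Y_{\tau_1}$. No gaps.
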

\begin{proof} (i) and (ii) are obvious. To prove (iii),  let $Y$ be a $G$-supermartingale. Then for any $\t_1\le \t_2$ and any $\dbP\in \cP$,
\beaa
Y_{\t_1} \ge \dbE^{G,\dbP}_{\t_1}[Y_{\t_2}] \ge \dbE^{\dbP}_{\t_1}[Y_{\t_2}],\q \dbP\mbox{-a.s.}
\eeaa
That is, $Y$ is a $\dbP$-supermartingale for all $\dbP\in\cP$, and thus is a $\cP$-supermartingale.
\end{proof}

We next study $\cP$-semimartingales.  In light of Theorem \ref{thm-semimg}, we define a new norm:
\bea
\label{Y-cP}
\|Y\|_\cP := \sup_{\dbP\in\cP} \|Y\|_\dbP.
\eea 
The following result is a direct consequence of Theorems \ref{thm-semimg-est} and \ref{thm-semimg}.

\begin{thm}
\label{thm-semicP}
Let Assumption \ref{assum-cP} hold. If $\|Y\|_\cP<\infty$, then $Y$ is a $\cP$-semimartingale. Moreover, for any $\dbP\in \cP$ and for the decomposition 
\bea
\label{YMAP}
Y_t = Y_0 + M^\dbP_t + A^\dbP_t,~~\dbP\mbox{-a.s.}
\eea
 we have
\beaa
\dbE^\dbP\Big[\la M^\dbP\ra_T + \big(\bigvee_0^T A^\dbP\big)^2\Big] \le C\|Y\|_\cP^2.
\eeaa
\end{thm}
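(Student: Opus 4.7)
The proof will be essentially a reduction to the single-measure case already handled by Theorems \ref{thm-semimg-est} and \ref{thm-semimg}, applied one $\dbP$ at a time. The plan is to fix an arbitrary $\dbP\in\cP$, observe that $\|Y\|_\dbP\le\|Y\|_\cP<\infty$ by the very definition \reff{Y-cP}, and then invoke the two results of Section 2.

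More precisely, first I would verify that the hypotheses of the single-measure theorems hold for every $\dbP\in\cP$: by Assumption \ref{assum-cP}(i), condition \reff{Filtration} is in force under each $\dbP$, which is exactly what Section 2 requires. Since $\|Y\|_\dbP\le \|Y\|_\cP<\infty$, Theorem \ref{thm-semimg} applied under $\dbP$ yields that $Y$ is a square integrable $\dbP$-semimartingale; in particular it admits a decomposition $Y_t=Y_0+M^\dbP_t+A^\dbP_t$ holding $\dbP$-a.s., with $M^\dbP$ a $\dbP$-martingale and $A^\dbP$ of integrable variation. As this is true for every $\dbP\in\cP$, Definition \ref{defn-cPmg} tells us $Y$ is a $\cP$-semimartingale.

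Next, for the norm estimate, I would apply the right inequality in \reff{semi-est} of Theorem \ref{thm-semimg-est} under the fixed $\dbP$, obtaining
\[
\dbE^\dbP\Big[\la M^\dbP\ra_T+\big(\bigvee_0^T A^\dbP\big)^2\Big]\le C\|Y\|_\dbP^2\le C\|Y\|_\cP^2,
\]
which is exactly the claimed bound. Since $\dbP\in\cP$ was arbitrary, this concludes the argument.

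The only mildly delicate point is measurability of the decomposition: the processes $M^\dbP$ and $A^\dbP$ produced by the $\dbP$-semimartingale decomposition are a priori only $\bar\dbF^\dbP$-adapted, whereas our standing convention in Section \ref{sect-G} is that processes be $\dbF$-progressively measurable. This is precisely the situation addressed by Lemma \ref{lem-filtration}, which lets us replace $M^\dbP$ and $A^\dbP$ by $\dbF$-progressively measurable versions agreeing with them $\dbP$-a.s., so the stated decomposition \reff{YMAP} and the bound are unaffected. I do not anticipate any genuine obstacle beyond this bookkeeping step, since all the heavy lifting has already been done in Theorem \ref{thm-semimg-est}.
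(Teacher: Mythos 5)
Your proposal is correct and follows exactly the paper's route: the paper states Theorem \ref{thm-semicP} is a direct consequence of Theorems \ref{thm-semimg-est} and \ref{thm-semimg}, applied measure by measure after noting $\|Y\|_\dbP\le\|Y\|_\cP$. Your extra remark about using Lemma \ref{lem-filtration} to fix the $\dbF$-measurability of the decomposition is a sensible piece of bookkeeping the paper leaves implicit.
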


The norm $\|\cd\|_\cP$ is defined through each $\dbP\in\cP$. The following definition relies on the $G$-expectation directly:
\bea
\label{Y-G}
\|Y\|_G^2 :=   \dbE^G\Big[\sup_{0\le t\le T}|Y_t|^2\Big] + \sup_\pi \sup_{\dbP\in\cP}\dbE^\dbP\Big[\Big( \sum_{i=0}^{n-1} \Big |\dbE^{G,\dbP}_{\t_i} (Y_{\t_{i+1}})- Y_{\t_i} \Big| \Big)^2\Big].
\eea
\begin{rem}
\label{rem-normG}
{\rm (i) If the involved conditional $G$-expectations exist, then we may simplify the definition of $\|Y\|_G$:
\beaa
\|Y\|_G^2 :=   \dbE^G\Big[\sup_{0\le t\le T}|Y_t|^2\Big] + \sup_\pi \dbE^G\Big[\Big( \sum_{i=0}^{n-1} \Big |\dbE^{G}_{\t_i} (Y_{\t_{i+1}})- Y_{\t_i} \Big| \Big)^2\Big].
\eeaa

(ii)  In general $\|\cd\|_G$  does not satisfy the triangle inequality and thus is not a norm.

(iii) For $G$-submartingales $Y^1, Y^2$, the triangle inequality holds:
\beaa
\|Y^1+Y^2\|_G \le \|Y^1\|_G + \|Y^2\|_G.
\eeaa
However, in general $Y^1 + Y^2$ may not be a $G$-submartingale anymore.
\qed}
\end{rem}

\no Nevertheless, $\|Y\|_G$ involves the process $Y$ only. The following estimate is the main result of this section.

\begin{thm}
\label{thm-semiG} 
Assume Assumption \ref{assum-cP} holds. Then there exists a universal constant $C$ such that $\|Y\|_\cP \le C\|Y\|_G$.
\end{thm}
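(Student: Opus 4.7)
The plan is to establish, uniformly in $\dbP\in\cP$ and with a universal constant $C$, that $\|Y\|_\dbP \le C\|Y\|_G$; the theorem then follows on taking the supremum over $\dbP$. The first summand $\dbE^\dbP[\sup_t|Y_t|^2]$ of $\|Y\|_\dbP^2$ is trivially dominated by $\dbE^G[\sup_t|Y_t|^2]\le\|Y\|_G^2$ since $\dbE^G=\sup_{\dbP\in\cP}\dbE^\dbP$. So the core task is to bound, uniformly in $\dbP$ and in an arbitrary partition $\pi:0=\t_0\le\cds\le\t_n=T$, the quantity $\dbE^\dbP\big[(\sum_i|A_i|)^2\big]$ with $A_i:=\dbE^\dbP_{\t_i}(Y_{\t_{i+1}})-Y_{\t_i}$.

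Set $\tilde A_i:=\dbE^{G,\dbP}_{\t_i}(Y_{\t_{i+1}})-Y_{\t_i}$ and $C_i:=\tilde A_i-A_i$. Since $\dbP$ itself lies in $\cP(\t_i,\dbP)$, the essential supremum defining $\dbE^{G,\dbP}_{\t_i}$ satisfies $\dbE^{G,\dbP}_{\t_i}(Y_{\t_{i+1}})\ge\dbE^\dbP_{\t_i}(Y_{\t_{i+1}})$, so $C_i\ge 0$. The triangle inequality then gives $|A_i|\le|\tilde A_i|+C_i$, hence $\dbE^\dbP\big[(\sum_i|A_i|)^2\big]\le 2\dbE^\dbP\big[(\sum_i|\tilde A_i|)^2\big]+2\dbE^\dbP\big[(\sum_i C_i)^2\big]$. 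The first term on the right is bounded by $2\|Y\|_G^2$ directly from the definition of $\|\cdot\|_G$, so the entire proof reduces to the estimate $\dbE^\dbP\big[(\sum_i C_i)^2\big]\le C\|Y\|_G^2$. Rewriting $\sum_i C_i=\sum_i\tilde A_i-\sum_i A_i$ and using the telescoping identity $\sum_i A_i=(Y_T-Y_0)-M^\pi_n$, where $M^\pi_n:=\sum_i[Y_{\t_{i+1}}-\dbE^\dbP_{\t_i}(Y_{\t_{i+1}})]$ is the terminal value of a $\dbP$-martingale in discrete time, reduces the task further to bounding $\dbE^\dbP[|M^\pi_n|^2]$ by $C\|Y\|_G^2$.

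The main obstacle is precisely this last martingale estimate, because $M^\pi_n$ is by construction the orthogonal-increment companion of $\sum_i A_i$ under $\dbP$, so its second moment is intrinsically coupled to the very total-variation quantity we are trying to control; naive orthogonality yields $\dbE^\dbP[|M^\pi_n|^2]=\sum_i\dbE^\dbP[|\D M^\pi_i|^2]$, which feeds back into $\sum_i A_i^2$ and cannot be absorbed by a straightforward Cauchy--Schwarz. To break the cycle, I would invoke the pasting axiom Assumption \ref{assum-cP}(iii) together with the dual representation $\dbE^{G,\dbP}_{\t_i}(Y_{\t_{i+1}})=\esup_{\dbP'\in\cP(\t_i,\dbP)}\dbE^{\dbP'}_{\t_i}(Y_{\t_{i+1}})$ to construct, for each $\e>0$, a measure $\bar\dbP\in\cP$ agreeing with $\dbP$ on $\cF_0$ whose conditional $\bar\dbP$-expectation at every partition point simultaneously $\e$-approximates $\dbE^{G,\dbP}_{\t_i}(Y_{\t_{i+1}})$. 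Comparing the $\dbP$- and $\bar\dbP$-predictable parts of $Y$ at the points of $\pi$, and exploiting that the definition of $\|Y\|_G$ supplies a bound under \emph{every} $\dbP'\in\cP$ (in particular under $\bar\dbP$), should yield the required control on $\dbE^\dbP[|M^\pi_n|^2]$. Sending $\e\to 0$ and then taking the supremum over $\pi$ and $\dbP$ completes the argument.
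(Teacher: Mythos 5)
Your reduction is on target up to the point you yourself flag as the main obstacle: with $C_i:=\dbE^{G,\dbP}_{\t_i}(Y_{\t_{i+1}})-\dbE^{\dbP}_{\t_i}(Y_{\t_{i+1}})\ge 0$, everything comes down to showing $\dbE^\dbP\big[(\sum_i C_i)^2\big]\le C\|Y\|_G^2$, and this is exactly where the paper's proof also concentrates its effort. But the step you propose to close the gap --- pasting an $\e$-optimal measure $\bar\dbP$ and ``comparing the $\dbP$- and $\bar\dbP$-predictable parts'' --- is not carried out and does not obviously work. The quantity you reduce to, $\dbE^\dbP[|M^\pi_n|^2]$, is a second moment of martingale increments under the fixed measure $\dbP$, whereas $\|Y\|_G$ only supplies bounds on squared sums of conditional first-order increments under the various measures; no comparison of predictable parts under two measures converts the latter into the former. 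Moreover, a single $\bar\dbP$ agreeing with $\dbP$ only on $\cF_0$ changes the classes $\cP(\t_i,\cdot)$ at the later partition points, so it is not even clear what the claimed simultaneous approximation would mean. As written, the argument is incomplete at its decisive step, and the reduction to $\dbE^\dbP[|M^\pi_n|^2]$ is merely a restatement of the problem, not progress on it.

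The missing idea is elementary and already in the paper's toolbox: your observation that $C_i\ge 0$ should be used to produce a \emph{monotone predictable drift}, not a triangle inequality followed by an unresolved martingale bound. Set $N_{\t_i}:=\sum_{j=0}^{i-1}\big[\dbE^{G,\dbP}_{\t_j}(Y_{\t_{j+1}})-Y_{\t_j}\big]$ and note the identity $Y_{\t_i}-N_{\t_i}=Y_0+\sum_{j<i}\big[Y_{\t_{j+1}}-\dbE^{\dbP}_{\t_j}(Y_{\t_{j+1}})\big]-\sum_{j<i}C_j$, in which the first sum is a $\dbP$-martingale and $\sum_{j<i}C_j$ is nondecreasing and $\cF_{\t_{i-1}}$-measurable. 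Lemma \ref{lem-super2}, the discrete estimate for processes with monotone predictable finite-variation part, then yields $\dbE^\dbP\big[(\sum_j C_j)^2\big]\le C\,\dbE^\dbP\big[\max_i|Y_{\t_i}-N_{\t_i}|^2\big]\le C\,\dbE^\dbP\big[\max_i|Y_{\t_i}|^2+\max_i|N_{\t_i}|^2\big]\le C\|Y\|_G^2$, since $\max_i|N_{\t_i}|\le\sum_j\big|\dbE^{G,\dbP}_{\t_j}(Y_{\t_{j+1}})-Y_{\t_j}\big|$ is controlled by the second term in the definition of $\|Y\|_G$. With that bound in hand, your triangle inequality finishes the proof exactly as you intended. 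This is precisely why the monotone-drift estimates (Lemmas \ref{lem-super} and \ref{lem-super2}) were established first.
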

\begin{proof} Without loss of generality, we assume $\|Y\|_G<\infty$. For any  $\dbP\in\cP$ and any partition $\pi: 0=\t_0\le \cds\le \t_n=T$, denote
\beaa
N_{\t_i} := \sum_{j=0}^{i-1} \Big[\dbE^{G,\dbP}_{\t_j} (Y_{\t_{j+1}})- Y_{\t_j}\Big].
\eeaa
Then
\beaa
Y_{\t_i} - N_{\t_i} &=& Y_0 +  \sum_{j=0}^{i-1} \Big[ Y_{\t_{j+1}}  - \dbE^{G,\dbP}_{\t_j} (Y_{\t_{j+1}})\Big]\\
& =& Y_0 +  \sum_{j=0}^{i-1} \Big[ Y_{\t_{j+1}}  - \dbE^{\dbP}_{\t_j} (Y_{\t_{j+1}})\Big]- \sum_{j=0}^{i-1} \Big[\dbE^{G,\dbP}_{\t_j} (Y_{\t_{j+1}}) - \dbE^{\dbP}_{\t_j} (Y_{\t_{j+1}})\Big].
\eeaa
Note that 
\beaa
&& \sum_{j=0}^{i-1} \Big[ Y_{\t_{j+1}}  - \dbE^{\dbP}_{\t_j} (Y_{\t_{j+1}})\Big]~~\mbox{is a $\dbP$-martingale},\\
 && \sum_{j=0}^{i-1} \Big[\dbE^{G,\dbP}_{\t_j} (Y_{\t_{j+1}}) - \dbE^{\dbP}_{\t_j} (Y_{\t_{j+1}})\Big]~~\mbox{is nondecreasing and is $\cF_{\t_{i-1}}$-measurable}.
 \eeaa
  Applying Lemma \ref{lem-super2} we obtain
\beaa
\dbE^\dbP\Big[\Big( \sum_{j=0}^{n-1} \big[\dbE^{G,\dbP}_{\t_j} (Y_{\t_{j+1}}) - \dbE^{\dbP}_{\t_j} (Y_{\t_{j+1}})\big]\Big)^2\Big] \le C\dbE^\dbP\Big[\sup_{0\le i\le n}[|Y_{\t_i}|^2 + |N_{\t_i}|^2]\Big] \le C\|Y\|_G^2.
\eeaa
This, together with the definition of $\|\cd\|_G$, implies that
\beaa
\dbE^\dbP\Big[\Big( \sum_{j=0}^{n-1} \big|\dbE^{\dbP}_{\t_j} (Y_{\t_{j+1}})- Y_{\t_j}\big|\Big)^2\Big] \le C\|Y\|_G^2.
\eeaa
Since $\pi$ is arbitrary, we get $\|Y\|_\dbP\le C\|Y\|_G$. Finally, since $\dbP\in\cP$ is arbitrary, we prove the result.
\end{proof}

\subsection{Doob-Meyer Decomposition for $G$-submartingales}

As a special case of Theorem \ref{thm-semicP}, we have the following decomposition for $G$ submartingales.
\begin{prop}
\label{prop-subcP}
Assume Assumption \ref{assum-cP} holds. If $Y$ is a $G$-submartingale satisfying $\|Y\|_\cP<\infty$ (in particular if $\|Y\|_G <\infty$), then all the results in  Theorem \ref{thm-semicP} hold.
\end{prop}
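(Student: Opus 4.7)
The plan is essentially to invoke Theorem \ref{thm-semicP} directly, since the hypothesis $\|Y\|_\cP < \infty$ is literally the hypothesis of that theorem. The $G$-submartingale property of $Y$ plays no additional role in deriving the semimartingale decomposition or the norm bound on $\la M^\dbP\ra_T + (\bigvee_0^T A^\dbP)^2$; it only provides the contextual motivation (a $G$-submartingale is in general neither a $\dbP$-submartingale nor a $\dbP$-supermartingale for each $\dbP\in\cP$, so unlike the $G$-supermartingale case handled by Proposition \ref{prop-cPG}(iii), one cannot extract the decomposition from classical Doob--Meyer under $\dbP$ without an external integrability input). That external input is supplied here by the assumption $\|Y\|_\cP<\infty$.

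More precisely, the first step is to observe that for every $\dbP\in\cP$ we have $\|Y\|_\dbP \le \|Y\|_\cP < \infty$ by the definition \reff{Y-cP}. The second step is then to apply Theorem \ref{thm-semicP} (whose proof in turn reduces to Theorems \ref{thm-semimg-est} and \ref{thm-semimg} applied under each fixed $\dbP\in\cP$): this gives, for every $\dbP\in\cP$, a decomposition $Y_t = Y_0 + M^\dbP_t + A^\dbP_t$ $\dbP$-a.s.\ as in \reff{YMAP}, together with the estimate
\[
\dbE^\dbP\Big[\la M^\dbP\ra_T + \big(\bigvee_0^T A^\dbP\big)^2\Big] \le C\|Y\|_\cP^2.
\]

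For the parenthetical strengthening, the third step is to handle the case $\|Y\|_G<\infty$: here Theorem \ref{thm-semiG} yields $\|Y\|_\cP \le C\|Y\|_G <\infty$, and one is reduced to the previous case (with the constant $C$ adjusted accordingly).

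There is no real obstacle, and this is precisely why the authors label the statement as a special case of Theorem \ref{thm-semicP} rather than a new theorem: the entire argument is one line of bookkeeping once the general machinery of Section \ref{sect-G} is in place. The only mildly subtle point worth flagging is that the decompositions $M^\dbP, A^\dbP$ genuinely depend on $\dbP$ and need not aggregate into a single pair $(M,A)$ defined outside a $\dbP$-null set; but the statement of Theorem \ref{thm-semicP} already accommodates this, so no additional aggregation argument is needed.
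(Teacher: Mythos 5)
Your proposal is correct and matches the paper's own treatment exactly: the paper presents Proposition \ref{prop-subcP} as an immediate special case of Theorem \ref{thm-semicP} (via $\|Y\|_\dbP \le \|Y\|_\cP$ for each $\dbP\in\cP$), with the parenthetical case handled by Theorem \ref{thm-semiG} giving $\|Y\|_\cP \le C\|Y\|_G$. Your observation that the $G$-submartingale hypothesis plays no role in the derivation and that the decompositions $(M^\dbP, A^\dbP)$ need not aggregate is also consistent with the paper's discussion following the proposition.
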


\begin{rem}
\label{rem-DoobMeyer}
{\rm Unlike Lemma \ref{lem-super}, for $G$-submartingales in general we do not have $\|Y\|_\cP \le C\sup_{\dbP\in\cP}\|Y\|_{\dbP,0}$. See Example \ref{eg-Gsub} below.
\qed}
\end{rem}

Now let $Y$ be as in Proposition \ref{prop-subcP}, and consider its decomposition \reff{YMAP}. Let $A^\dbP = L^\dbP - K^\dbP$ be the orthogonal decomposition. We have the following conjecture:

\bs
\no{\bf Conjecture (Doob-Meyer decomposition)} : {\it The family $\{K^\dbP, \dbP\in\cP\}$ satisfies the following property:
\bea
\label{Kmg}
- K^\dbP_t = \esup_{\dbP'\in \cP(t,\dbP)}^{\dbP} \dbE^{\dbP'}_t\Big[- K^{\dbP'}_T\Big].
\eea
In particular, if the families $\{M^\dbP, K^\dbP, L^\dbP, \dbP\in \cP\}$ can be aggregated into $\{M, K, L\}$,  then $-K$ is a $G$-martingale, and we have the following desired Doob-Meyer decomposition for $G$-submartingales:
\bea
\label{DM}
\left.\ba{c}
Y_t = Y_0 + [M_t - K_t] + L_t, \\
\mbox{where $M-K$ is a $G$-martingale and $L$ is nondecreasing}.
\ea\right.
\eea
\qed}

We refer again to \cite{STZ-aggregation} for the issue of aggregation. In particular, we can always aggregate the families $\{M^\dbP, K^\dbP, L^\dbP, \dbP\in \cP\}$ when the class $\cP$ is separable, in the sense of \cite{STZ-aggregation}.  This conjecture looks natural, but it is quite subtle. Our estimates in this section are rather preliminary. We hope to address the issue more thoroughly in some future research.


\section{Appendix}
\setcounter{equation}{0}

We first provide an example such that $\|Y\|_{\dbP,0} <\infty$ but $\|Y\|_\dbP =\infty$.
\begin{eg}
\label{eg-Y0}
Fix $\dbP$. Let $K$ be an $\dbF$-progressively measurable continuous increasing  process such that $K_0 = 0$ and $\dbE^\dbP[K^2_T] = \infty$. Define the sequence of stopping times: $\t_0:=0$ and, for $n\ge 1$, $\t_n := \inf \{ t \ge 0: K_t = n \} \wedge T$. Since $K_T < \infty$, $\t_n = T$ for $n$ large enough, a.s. We now define the process $Y_t$ as follows: $Y_0:=0$, and for $n\ge 0$,
\bea
\label{eg1-Y}
Y_t := \left \{ 
\ba{lll}
Y_{\t_{2n}} - K_t + K_{\t_{2n}},\q t\in  (\t_{2n}, \t_{2n+1}] ;\\
  Y_{\t_{2n+1}} + K_t-K_{\t_{2n+1}} ,\q t \in (\t_{2n+1}, \t_{2n+2}] .
  \ea\right.
  \eea
  Then $\|Y\|_{\dbP,0} <\infty$ but $\|Y\|_\dbP =\infty$.
  \end{eg}
  \begin{proof}
It is easy to check  that $-1 \le Y_t \le 0$ and  $\bigvee_0^T Y = K_T$. Then $\|Y\|_{\dbP,0} \le 1$ and $\dbE^\dbP \Big[\big( \bigvee_0^T Y  \big)^2\Big] = \infty$. By Theorem \ref{thm-semimg}, we get $\|Y\|_\dbP=\infty$.
\end{proof}

We next provide a $G$-submartingale such that $\sup_{\dbP\in \cP}\|Y\|_{\dbP,0} <\infty$, but $\|Y\|_{\cP}=\infty$.
\begin{eg}
\label{eg-Gsub}
Fix $\cP$. Let $K$ be as in Example \ref{eg-Y0} such that $-K$ is a $G$ martingale and $\dbE^G[K^2_T] = \infty$, instead of $\dbE^\dbP[K^2_T] = \infty$. Then the process $Y$ defined in Example \ref{eg-Y0} satisfies all the requirements. 
\end{eg}
\begin{proof} By the proof of Example \ref{eg-Y0}, clearly $\sup_{\dbP\in \cP}\|Y\|_{\dbP,0} <\infty$, but $\|Y\|_{\cP}=\infty$. Moreover, on $(\t_{2n}, \t_{2n+1}]$, $dY_t = -dK_t$ and thus is a $G$martingale; and on  $(\t_{2n+1}, \t_{2n+2}]$, $dY_t = dK_t$, then $Y$ is increasing and thus is a $G$-submartingale. So $Y$ is a $G$-submartingale on $[0,T]$. 
\end{proof}

We finally prove Lemma \ref{lem-DPP}.

\ms
{\it Proof of Lemma \ref{lem-DPP}.}
First we have
\beaa
\dbE^{G,\dbP}_{\t_1}[\xi] &=& \esup_{\dbP'\in \cP(\t_1,\dbP)}^{\dbP} \dbE^{\dbP'}_{\t_1}[\xi] = \esup_{\dbP'\in \cP(\t_1,\dbP)}^{\dbP} \dbE^{\dbP'}_{\t_1}[\dbE^{\dbP'}_{\t_2} [\xi]] \leq \esup_{\dbP'\in \cP(\t_1,\dbP)}^{\dbP} \dbE^{\dbP'}_{\t_1}[\dbE^{G,\dbP'}_{\t_2} [\xi]].
\eeaa

To prove the other inequality,  for each $\dbP'\in \cP(\t_1, \dbP)$, we recall from Neveu \cite{Neveu} that there exists a sequence $\dbP^n \in \cP(\t_2,\dbP')$ such that
\beaa
\dbE^{\dbP^n}_{\t_2} [\xi] \rightarrow \esup_{\tilde{\dbP} \in \cP(\t_2,\dbP')}^{\dbP'} \dbE^{\tilde{\dbP}}_{\t_2}[\xi],~~\mbox{as}~~ n \rightarrow \infty.
\eeaa
We claim that we may construct  $\tilde{\dbP}^n \in \cP(\t_2,\dbP')\subset \cP(\t_1, \dbP)$ such that 
\bea
\label{tildePn}
\dbE^{\tilde\dbP^n}_{\t_2} [\xi]  = \max_{1\le m\le n} \dbE^{\dbP^m}_{\t_2} [\xi], ~\dbP'\mbox{-a.s.}
\eea
Then clearly  $\dbE^{\tilde{\dbP}^n}_{\t_2} [\xi] \uparrow \esup_{\tilde{\dbP} \in \cP(\t_2,\dbP')}^{\dbP'} \dbE^{\tilde{\dbP}}_{\t_2}[\xi]$, $\dbP'$-a.s.. Thus, 
\beaa
\dbE^{\dbP'}_{\t_1}\Big[ \esup_{\tilde{\dbP} \in \cP(\t_2,\dbP')}^{\dbP'} \dbE^{\tilde{\dbP}}_{\t_2}[\xi] \Big] &=& \lim_{n \rightarrow \infty} \dbE^{\dbP'}_{\t_1} \Big[ \dbE^{\tilde{\dbP}^n}_{\t_2} [\xi] \Big] \\
&=& \lim_{n \rightarrow \infty} \dbE^{\tilde{\dbP}^n}_{\t_1} \Big[\dbE^{\tilde{\dbP}^n}_{\t_2} [\xi] \Big] = \lim_{n \rightarrow \infty}\dbE^{\tilde{\dbP}^n}_{\t_1}[\xi] \leq \dbE^{G,\dbP}_{\t_1}[\xi],~~\dbP\mbox{-a.s.}
\eeaa
By the arbitrariness of  $\dbP'\in \cP(\t_1, \dbP)$, we prove the lemma.

It remains to prove \reff{tildePn}. Indeed, for $\dbP^1, \dbP^2 \in \cP(\t_2,\dbP')$,  define
\beaa
\tilde{\dbP}^2 (E) := \dbP^1 (E \cap E^+) + \dbP^2 (E \cap E^-), &\mbox{for any}& E\in \cF,
\eeaa
where
$E^+ := \Big \{ \dbE^{\dbP^1}_{\t_2} [\xi] \geq \dbE^{\dbP^2}_{\t_2} [\xi] \Big \}$ and $E^- := \Big \{ \dbE^{\dbP^1}_{\t_2} [\xi] < \dbE^{\dbP^2}_{\t_2} [\xi] \Big \}$. Clearly $E^+, E^- \in \cF_{\t_2}$ and form a partition of $\O$. Then it follows from Assumption \ref{assum-cP} (iii) that $\tilde\dbP^2\in \cP(\t_2, \dbP')$.   Moreover, for any $E \in \cF_{\t_2}$:
\beaa
\dbE^{\dbP'}\Big[\dbE^{\tilde{\dbP}^2}_{\t_2} [\xi]\mathbf{1}_E\Big] &=&\dbE^{\tilde{\dbP}^2} [\xi\mathbf{1}_E] = \dbE^{\tilde{\dbP}^2} [\xi\mathbf{1}_{E \cap E^+}] + \dbE^{\tilde{\dbP}^2} [\xi\mathbf{1}_{E \cap E^-}]\\
&=& \dbE^{\dbP^1} [\xi\mathbf{1}_{E \cap E^+}] + \dbE^{\dbP^2} [\xi\mathbf{1}_{E \cap E^-}]\\
&=& \dbE^{\dbP'}\Big[\dbE^{\dbP^1}_{\t_2} [\xi]\mathbf{1}_{E \cap E^+}\Big] +  \dbE^{\dbP'}\Big[\dbE^{\dbP^2}_{\t_2} [\xi]\mathbf{1}_{E \cap E^-}]\Big]\\
& =&   \dbE^{\dbP'}\Big[\big( \dbE^{\dbP^1}_{\t_2} [\xi] \vee \dbE^{\dbP^2}_{\t_2} [\xi]\big) \1_E\Big] .
\eeaa
Thus $\dbE^{\tilde{\dbP}^2}_{\t_2} [\xi] = \dbE^{\dbP^1}_{\t_2} [\xi] \vee \dbE^{\dbP^2}_{\t_2} [\xi] $. Repeat the arguments we prove \reff{tildePn} and hence the lemma.
\qed


\end{document}